\newcommand{\mod}{\; \mbox{mod}\;}
\newcommand{\scst}{\scriptscriptstyle}
\begin{document}

\mainmatter  
\title{Generalized binary 
arrays from quasi-orthogonal cocycles}

\author{J. A. Armario\and D.~L.~Flannery}

\institute{Departamento de Matem\'atica Aplicada I, 
Universidad de Sevilla,
Avda. Reina Mercedes s/n, 
41012 Sevilla, Spain\\
\mailsa 
\and 
School of Mathematics, Statistics and Applied 
Mathematics, 
National University of Ireland~Galway, 
Galway H91TK33, 
Ireland\\ 
\mailsb}
\maketitle

\begin{abstract}
Generalized perfect binary arrays (GPBAs) were used by
Jedwab to construct perfect binary arrays. 
A non-trivial GPBA can exist only if its energy is 
$2$ or a multiple of $4$.
This paper introduces \textit{generalized optimal 
binary arrays} (GOBAs) with even energy not divisible 
by $4$, as analogs of GPBAs. 
We give a procedure to construct GOBAs based on a 
characterization of the arrays in terms of 
$2$-cocycles. As a further application, we 
determine negaperiodic Golay pairs arising from  
generalized optimal binary sequences
of small length.
\end{abstract}

\section{Introduction}\label{Introduction}
Let $\phi = (\phi(0), \ldots, \phi(n-1))\in 
\{ \pm 1\}^n$ 
be a binary sequence of length $n$. 
Reading arguments modulo $n$,  
\[
R_\phi(w):=\sum_{k=0}^{n-1} \phi(k)\phi(k+w)
\]
is the {\em periodic autocorrelation of $\phi$ at 
shift $w$}. The {\em expansion}  of $\phi$, denoted 
$\phi'$, is the concatenation of $\phi$ and $-\phi$ 
(in that order).
A pair $\phi_1$, $\phi_2$  of binary sequences, each 
of length $2t$,  such that
$R_{\phi_1'}(w)+R_{\phi_2'}(w)=0$
for $1\leq w \leq 2t-1$ 
(equivalently, for $1\leq
\allowbreak w\leq 4t-1$ and $w\not= 2t$),
is a {\em negaperiodic Golay pair} (NGP).
Note that the original definition of NGP in  
\cite{BD15} coincides with the definition 
above by \cite[Lemma~2]{Ega16}.

We seek good sources of NGPs. 
This objective is connected to several existence 
problems in algebraic design theory. For example,
Egan showed that NGPs of length $2t$ are 
equivalent to certain relative 
$(4t,2,4t,2t)$-difference sets in the 
dicyclic group $Q_{8t}$ of order 
$8t$~\cite[Theorem~3]{Ega16}. 
Actually, there is a relative 
$(4t,2,4t,2t)$-difference set in a central 
extension $E$ of $\mathbb{Z}_2$ by a 
group $G$ of order $4t$, relative to $\mathbb{Z}_2$, 
if and only if there is a Hadamard matrix of 
order $4t$ whose expanded (group-divisible) 
design admits a special regular action by $E$: 
a \textit{cocyclic} Hadamard 
matrix over $G$~\cite[Theorem~2.4]{DFH00}.
By way of \cite[Theorem~3.3]{Fla97}, 
Ito~\cite[p.~370]{Ito97} conjectured that 
$Q_{8t}$ contains such relative difference 
sets for all $t$. Schmidt~\cite{Sch99} 
has verified Ito's conjecture up to $t=46$.
Our recent paper~\cite{AF17} initiated the 
study of {\em quasi-orthogonal} cocycles
over groups $G$ of even order not divisible by 
$4$, in direct analogy with  
cocyclic Hadamard matrices. The present 
paper builds on \cite{AF17}.

It is easy to see that
\begin{equation}\label{boundautocorr}
 \max_{0<w<n}|R_\phi(w)|\geq 
\left\{\begin{array}{cl}
0 & \hspace{10pt} n\equiv  0 \ \mod 4\\
1 & \hspace{10pt} n \equiv 1 \, 
\mbox{ or } \, 3 \ \mod 4\\
2 & \hspace{10pt} n \equiv 2 \ \mod 4.
\end{array}\right.
\end{equation}
The sequence $\phi$ is {\em optimal} if equality 
holds in (\ref{boundautocorr}).
In particular, $\phi$ is {\em perfect} if
$R_\phi(w) =0$ for $0<w<n$.
No perfect binary sequence of length $n>4$ is known.
Attention consequently turns to the larger 
class of {\em perfect binary arrays} (PBAs).
Jedwab~\cite{Jed92} introduced 
\emph{generalized perfect binary arrays} 
(GPBAs) to aid in the construction of PBAs. 
Hughes~\cite{Hug00} subsequently demonstrated the 
cocyclic nature of GPBAs.

A \emph{generalized perfect binary sequence} (GPBS)
is a $1$-dimensional GPBA; 
such $\phi$ have $R_{\phi'}(w)=0$ for all $w$.
Each pair of  GPBSs is obviously an NGP. However, 
a GPBS exists only if $n=2$~\cite[Result~4.8]{Jed92}. 
So let $n>2$ be even; since $R_{\phi'}(w)$ is 
divisible by $4$, and not every $R_{\phi'}(w)$ 
is $0$, some $|R_{\phi'}(w)|$ must be at least $4$.
Thus, we will say that $\phi$ of length $2t$ is a
{\em generalized optimal binary sequence} (GOBS)
if $\max_{0<w<2t}|R_{\phi'}(w)|=4$.
Equivalently, $\phi$ is a GOBS if, for $0<w<2t$,
\[
|R_{\phi'}(w)|=\left\{\begin{array}{cl}
0 & \,\,\mbox{$w$  odd}\\
4 & \,\,\mbox{$w$ even} 
\end{array}\right.
\]
\noindent when $t$ is odd, and
\[
|R_{\phi'}(w)|=\left\{\begin{array}{cl}
4 & \,\,\mbox{$w$ odd}\\
0 & \,\,\mbox{$w$ even} 
\end{array}\right.
\]
when $t$ is even. 
We propose searching for NGPs in the set of 
GOBs of length $2t$, $t$ odd. 

Just as the notion of GPBA extends that of GPBS 
to dimensions greater than $1$, a 
GOBA (\textit{generalized optimal binary array}) is 
a higher-dimensional version of a GOBS.
Section~\ref{Section3} treats GPBAs and GOBAs
from the perspective of \cite{AF17}. 
We prove a one-to-one correspondence between 
GOBAs, quasi-orthogonal cocycles over abelian 
groups, and abelian relative quasi-difference sets.
In Section~\ref{Section4}, we outline and apply 
a method to find NGPs among GOBSs that correspond 
to quasi-orthogonal cocycles over cyclic groups.
The concluding Section~\ref{NormalMatrices}
looks at an important question for cocyclic designs 
prompted by the analysis in Section~\ref{Section4}.

\section{Quasi-orthogonal cocycles and related 
combinatorial structures}
\label{Section2}

Let $G$ and $U$ be finite groups, with $U$ abelian.
A map $\psi : G\times G\rightarrow 
U$ such that $\psi(1,1)= 1$ and
\begin{equation}\label{condiciondecociclo}
\psi(g,h)\psi(gh,k)=\psi(g,hk)\psi(h,k)\quad\ \forall 
\hspace{1pt} g,h,k\in G
\end{equation}
is a (normalized) \emph{cocycle} over $G$. 
If $\phi : G\rightarrow U$ is any map that is 
normalized (i.e., $\phi(1)=1$) then 
$\partial\phi(g,h)=\phi(g)^{-1}\phi(h)^{-1}\phi(gh)$ 
defines a cocycle
$\partial \phi$, called a  {\em coboundary}.
The set of all cocycles over $G$ forms an abelian 
group $Z^2(G,U)$, whose quotient by the subgroup 
$B^2(G,U)$ of coboundaries 
is the \emph{second cohomology group} $H^2(G,U)$. 
We display $\psi\in Z^2(G,U)$ as a \textit{cocyclic 
matrix} $M_\psi = [\psi(g,h)]_{g,h\in G}$. If
$U = \mathbb{Z}_2 = \langle -1 \rangle$ and
$M_\psi$ is Hadamard then $\psi$ is said to 
be \textit{orthogonal}.

The {\it row excess} $RE(M)$ of a cocyclic matrix $M$
indexed by $G$ is 
the sum of the absolute values of 
all row sums, apart from
row $1_G$.
The cocycle equation 
(\ref{condiciondecociclo}) guarantees that
$\psi$ is orthogonal if and only if $RE(M_\psi)$ 
is optimal, i.e., zero.

For the rest of this section, $|G|= 4t+2> 2$.
\begin{proposition}\label{OptimalRowExcess} 
\begin{itemize}
\item[{\rm (i)}]
If $\psi\in Z^2(G,\mathbb{Z}_2)$ then 
$RE(M_\psi)\geq 4t$.
\item[{\rm (ii)}] 
If $\psi\in B^2(G,\mathbb{Z}_2)$ then 
$RE(M_\psi)\geq 8t+2$.
\end{itemize}
\end{proposition}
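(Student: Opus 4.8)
The plan is to work entirely with the row sums $r(g) := \sum_{h \in G} \psi(g,h)$, so that $RE(M_\psi) = \sum_{g \neq 1_G} |r(g)|$, and to exploit two elementary facts: each $r(g)$ is an even integer (a sum of $|G| = 4t+2$ signs $\pm 1$), and the Gram matrix $N := M_\psi M_\psi^{\top}$ is tightly constrained by the cocycle identity. A one-line rearrangement of (\ref{condiciondecociclo}) in $\mathbb{Z}_2$ gives $\psi(g,h)\,\psi(g',h) = \psi(g(g')^{-1},g')\,\psi(g(g')^{-1},g'h)$; summing over $h$ and reindexing the right-hand side yields $N_{g,g'} = \pm\, r\bigl(g(g')^{-1}\bigr)$ (this is also the mechanism behind the already-quoted fact that $\psi$ is orthogonal exactly when $RE(M_\psi)=0$). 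Consequently the rows of $M_\psi$ indexed by $g$ and $g'$ are orthogonal precisely when $g(g')^{-1}$ lies in $Z := \{\, d \in G : r(d) = 0 \,\}$; note $1_G \notin Z$, and symmetry of $N$ forces $Z = Z^{-1}$.

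For part (i) I would then pass to the Cayley graph $\Gamma := \mathrm{Cay}(G,Z)$, a simple $|Z|$-regular graph on the $4t+2$ vertices of $G$ in which adjacency of $g$ and $g'$ records orthogonality of the corresponding rows of $M_\psi$. The crucial claim is that $\Gamma$ is triangle-free: three pairwise adjacent vertices would produce three distinct, pairwise orthogonal $\pm 1$ vectors of length $n = 4t+2$, and, multiplying all three entrywise by one of them, one would obtain a $\pm 1$ vector with exactly $n/2$ ones agreeing with a second such vector in exactly $n/4$ positions --- impossible because $n \equiv 2 \pmod 4$. Mantel's theorem then forces $n|Z|/2 \leq n^2/4$, so $|Z| \leq n/2 = 2t+1$. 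Since $r(g)$ is a nonzero even integer (hence of absolute value $\geq 2$) for each of the at least $|G| - 1 - |Z| \geq 2t$ group elements outside $\{1_G\} \cup Z$, we get $RE(M_\psi) \geq 2\cdot 2t = 4t$.

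For part (ii) I would write $\psi = \partial\phi$, so $\psi(g,h) = \phi(g)\phi(h)\phi(gh)$ and hence $|r(g)| = |C_\phi(g)|$ with $C_\phi(g) := \sum_{h \in G} \phi(h)\phi(gh)$. The decisive observation is a parity one: the product of the $n$ summands of $C_\phi(g)$ equals $\bigl(\prod_{h \in G}\phi(h)\bigr)^2 = 1$, so an even number of those summands are $-1$; since $n/2 = 2t+1$ is odd, $C_\phi(g)$ is never $0$. Therefore $|r(g)| \geq 2$ for every $g \neq 1_G$, whence $RE(M_\psi) \geq 2(|G|-1) = 8t+2$.

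The identity for $N$ and the final countings are routine; the real content of part (i) is the inequality $|Z| \leq 2t+1$, i.e.\ converting the elementary nonexistence of three pairwise orthogonal $\pm 1$ vectors of length $\equiv 2 \pmod 4$ into an edge bound for $\Gamma$, and that is the step I expect to need the most care --- in particular verifying that $Z$ is loop-free and symmetric so that Mantel's theorem genuinely applies. Part (ii) is short once the parity remark is spotted; a sequence $\phi$ that is constant except at a single point meets the bound $8t+2$, so no improvement is possible there.
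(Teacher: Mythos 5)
Your proof is correct, and it is genuinely useful here because the paper gives no argument for this proposition, deferring entirely to \cite{AF17}. The ingredients nevertheless coincide with those of the cited proof: your Gram identity $\mathrm{Gr}(M_\psi)_{g,g'}=\pm\, r\bigl(g(g')^{-1}\bigr)$ is precisely Lemma~\ref{REeqCE} later in the paper, and the engine of part (i) --- that three pairwise orthogonal $\pm 1$ vectors of length $n\equiv 2 \ \mod 4$ cannot exist --- is the classical fact behind the bound in \cite{AF17}. Your Cayley-graph/Mantel packaging of $|Z|\le 2t+1$ is sound (you rightly verified $1_G\notin Z$ and $Z=Z^{-1}$, and the triangle-freeness argument is the standard one); an equivalent and slightly more direct route is to observe that for $y\in Z$ the translate $Zy^{-1}$ is disjoint from $Z$, since a common element $xy^{-1}\in Z$ with $x\in Z$ would make the rows indexed by $1_G$, $x$, $y$ pairwise orthogonal, whence $2|Z|\le |G|$. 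In part (ii) your parity observation in fact yields $r(g)\equiv 4t+2\equiv 2 \ \mod 4$, so $|r(g)|\ge 2$ for every $g\ne 1_G$ and $RE(M_{\partial\phi})\ge 8t+2$ as required. The one slip is your closing sharpness claim: if $\phi$ is constant except at a single point, then every non-initial row sum of $M_{\partial\phi}$ has absolute value $n-4=4t-2$, so $RE(M_{\partial\phi})=(4t+1)(4t-2)$, which equals $8t+2$ only when $t=1$; attainability of the bound in (ii) is instead witnessed by genuine quasi-orthogonal coboundaries such as $\partial\phi_2$ in Example~\ref{ExampleOne}. This does not affect the validity of your proof of the stated lower bounds.
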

\begin{proof}
See \cite[Proposition~1]{AF17}. \hfill $\Box$
\end{proof}

In analogy with the definition of 
orthogonal cocycles, we say 
that $\psi$ is {\em quasi-orthogonal}
if its matrix has least possible row excess:  
by Proposition~\ref{OptimalRowExcess}, either 
$\psi\not \in B^2(G,\mathbb{Z}_2)$ 
and $RE(M_\psi)=4t$, or 
$\psi\in B^2(G,\mathbb{Z}_2)$ and
$RE(M_\psi)=8t+2$
(coboundaries were excluded 
from the notion of quasi-orthogonality
in \cite{AF17}).
\begin{lemma}
\label{lemmaquasiortho}
Let $X_m=\{g\in G \ | \
{\textstyle \sum}_{h\in G}\psi(g,h)=
\allowbreak m\}$.
Then $\psi$ is \mbox{quasi-orthogonal} if 
and only if 
$|X_{2}\cup X_{-2}|=4t+1$ for 
$\psi\in B^2(G,\mathbb{Z}_2)$, or
$|X_0|=\allowbreak 2t+1$ and 
$|X_{2}\cup X_{-2}|=2t$ for 
$\psi\not \in B^2(G,\mathbb{Z}_2)$.
\end{lemma}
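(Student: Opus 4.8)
The plan is to read off the equality cases in Proposition~\ref{OptimalRowExcess} and then carry out some elementary bookkeeping. Write $n=|G|=4t+2$, and for $g\in G$ let $m_g=\sum_{h\in G}\psi(g,h)$ denote the $g$th row sum of $M_\psi$ (so $g\in X_{m_g}$ and $RE(M_\psi)=\sum_{g\neq 1_G}|m_g|$). Since $n$ is even, every $m_g$ is even; since $\psi$ is normalized, row $1_G$ of $M_\psi$ consists entirely of $1$s, so $m_{1_G}=n$ and $1_G\notin X_0\cup X_2\cup X_{-2}$. Besides Proposition~\ref{OptimalRowExcess}, the only arithmetic I need is $|G|-1=4t+1=(2t+1)+2t$.

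First I would treat $\psi=\partial\phi\in B^2(G,\mathbb{Z}_2)$. Putting $N=\{h\in G:\phi(h)=-1\}$ and $\mu_g=|N\setminus g^{-1}N|$, a one-line inclusion--exclusion count of the $h$ with $\phi(h)\phi(gh)=-1$ gives $m_g=\phi(g)\,(n-4\mu_g)$, whence $|m_g|=2\,|2t+1-2\mu_g|$ is an odd multiple of $2$. Hence $|m_g|\geq 2$ for every $g\neq 1_G$ and $RE(M_\psi)\geq 2(4t+1)=8t+2$, recovering Proposition~\ref{OptimalRowExcess}(ii), with equality iff $|m_g|=2$ for all $g\neq 1_G$, i.e.\ iff $X_2\cup X_{-2}=G\setminus\{1_G\}$; this is exactly the stated condition $|X_2\cup X_{-2}|=4t+1$.

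Now take $\psi\notin B^2(G,\mathbb{Z}_2)$, where Proposition~\ref{OptimalRowExcess}(i) gives $RE(M_\psi)\geq 4t$. One direction is immediate: if $|X_0|=2t+1$ and $|X_2\cup X_{-2}|=2t$, then $X_0$, $X_2$, $X_{-2}$ exhaust the $4t+1$ rows other than $1_G$, so $RE(M_\psi)=2\cdot 2t=4t$ is minimal and $\psi$ is quasi-orthogonal. For the converse, suppose $RE(M_\psi)=4t$; if I know at this point that $|m_g|\leq 2$ for every $g\neq 1_G$, the counts fall out, since with $x_0=|X_0|$ and $x_2=|X_2\cup X_{-2}|$ I have $x_0+x_2=4t+1$ and $2x_2=RE(M_\psi)=4t$, forcing $x_2=2t$, $x_0=2t+1$.

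So the whole difficulty sits in one claim, which I expect to be the main obstacle: \emph{if $\psi\notin B^2(G,\mathbb{Z}_2)$ and $RE(M_\psi)=4t$, then no row sum has absolute value $\geq 4$.} This is essentially the equality analysis hidden inside the proof of Proposition~\ref{OptimalRowExcess}(i), and I would attack it in one of two ways. One route: the cocycle equation yields $\langle r_a,r_b\rangle=\psi(ab^{-1},b)\,m_{ab^{-1}}$ for the rows $r_g$ of $M_\psi$, so $M_\psi M_\psi^{\top}$ has constant diagonal $n$ and off-diagonal entries $\pm m_{ab^{-1}}$; being a Gram matrix it is positive semidefinite, and combining this with $\mathrm{tr}(M_\psi M_\psi^{\top})=n^2$ and $\mathrm{tr}((M_\psi M_\psi^{\top})^2)=n\sum_{g}m_g^2$ should constrain the multiset $\{m_g:g\in G\}$ tightly enough to forbid an entry of absolute size $\geq 4$ once $RE(M_\psi)=4t$. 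The other route uses the index-$2$ subgroup $H\leq G$ that exists because $|G|\equiv 2 \mod 4$: as $|H|=2t+1$ is odd, $H^2(H,\mathbb{Z}_2)=0$, so $\psi|_{H\times H}$ is a coboundary and the closed form from the second paragraph applies on $H$; splitting each $m_g$ into its contributions over the two cosets of $H$ then localizes the row excess and rules out a large row sum at the minimum. Everything outside this claim is the bookkeeping already described.
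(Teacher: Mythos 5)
The paper offers no in-line argument here (it just cites \cite[Lemma~2.4]{AF17}), so the benchmark is that proof. Your coboundary case is complete and correct: $m_g=\phi(g)(n-4\mu_g)$ forces $|m_g|\equiv 2 \mod 4$, hence $|m_g|\geq 2$ for all $g\neq 1_G$, and equality in $RE\geq 8t+2$ is exactly $|X_2\cup X_{-2}|=4t+1$. The easy direction for $\psi\notin B^2(G,\mathbb{Z}_2)$ is also fine. But the converse direction is not proved: you correctly isolate the crux --- that $RE(M_\psi)=4t$ forbids any row sum of absolute value $\geq 4$ --- and then only sketch two possible attacks without carrying either out. This is a genuine gap, not a routine verification: Proposition~\ref{OptimalRowExcess}(i) alone is consistent with, say, $t$ rows of sum $\pm 4$ and $3t+1$ rows of sum $0$, which also gives $RE=4t$ but $|X_0|=3t+1$. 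Something beyond the stated inequality must rule this out.

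The missing ingredient is a mod-$4$ dichotomy on row sums, which is how \cite{AF17} argues. Writing $k_g$ for the number of $-1$s in row $g$, the cocycle identity (your formula $\langle r_g,r_{g'}\rangle=\psi(a,g')m_a$, $a=g(g')^{-1}$, taken as a product over $h$ instead of a sum) shows that $g\mapsto\prod_{h\in G}\psi(g,h)=(-1)^{k_g}$ is a homomorphism $G\to\{\pm 1\}$; it is trivial exactly when $\psi\in B^2(G,\mathbb{Z}_2)$ (for $|G|\equiv 2\mod 4$). For $\psi\notin B^2$ its kernel has index $2$, so exactly $2t+1$ rows satisfy $m_g\equiv 0 \mod 4$ and the remaining $2t$ non-identity rows satisfy $m_g\equiv 2\mod 4$, hence $|m_g|\geq 2$. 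This proves Proposition~\ref{OptimalRowExcess}(i) \emph{and} shows equality holds iff those $2t$ rows have sum $\pm 2$ and the other $2t+1$ have sum $0$, which is the lemma. Of your two proposed routes, the first (positive semidefiniteness of $M_\psi M_\psi^\top$ plus the two trace identities) only controls $\sum_g|m_g|$ and $\sum_g m_g^2$ up to inequalities that the bad configuration above does not violate, so I do not believe it can close the gap; the second (restricting to the index-$2$ subgroup of odd order, where $\psi$ is a coboundary) is essentially the same parity mechanism in disguise and could be made to work, but as written it is a plan, not a proof.
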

\begin{proof}
See \cite[Lemma~2.4]{AF17}. \hfill $\Box$
\end{proof}

It is not known whether quasi-orthogonal 
cocycles always exist.
Indeed, we do not know of a group $G$ such that 
$Z^2(G,\mathbb{Z}_2)$ does not
contain a quasi-orthogonal element (in contrast,
there are several non-existence results for
orthogonal cocycles, e.g.,
due to Ito~\cite{Ito94}). 
We have found quasi-orthogonal 
coboundaries over many abelian $G$, but none over 
non-abelian $G$ such as dihedral groups, apart 
from the dihedral group of order $6$. 
Thirdly, for all $t$ such that $4t+1$ is a sum 
of two squares that we tested, we always 
found a quasi-orthogonal cocycle 
$\psi$ over some group of order $4t+2$ 
with $|\mathrm{det}(M_\psi)|$ attaining
the maximum $2(4t+1)(4t)^{2t}$ established 
by Ehlich-Wojtas.
These existence questions
all merit deeper investigation.

Let $E$ be a group with a normal subgroup $N$ of 
order $m$ and index $v$.  
A \emph{relative $(v,m,k,\lambda)$-difference set in 
$E$ relative to $N$} (the \textit{forbidden subgroup})
is a $k$-subset $R$ of 
a transversal for $N$ in $E$ such that
\[
|R\cap xR|=\lambda \quad  \forall \hspace{1pt}
x\in E\setminus N.
\]
Relative $(2s,2,2s,s)$-difference sets are
especially interesting. If $s$ is even then
they are equivalent to cocyclic Hadamard 
matrices~\cite[Corollary~2.5]{DFH00},
whereas none exist if $s$ is odd~\cite{Hir03}. 
In the latter case there is a natural analog of 
relative difference set.
Suppose that $|E|= 8t+4$ and let
$Z\cong \mathbb{Z}_2$ be a normal 
(hence central) subgroup of $E$. A  
{\em relative $(4t+2,2,4t+2,2t+1)$-quasi-difference 
set in $E$ with forbidden subgroup $Z$} is a 
transversal $R$ for $Z$ in $E$ containing a subset 
$S\subset R\setminus \{1\}$ of size $0$ or $2t+1$
such that, for all $x\in E\setminus Z$,
\[
|R\cap xR| = 
\left\{ 
\begin{array}{ll}
2t+1 & \hspace{10pt}  x\in  SZ\\
2t \ \mathrm{or} \ 2t+2 & 
\hspace{10pt} \mathrm{otherwise.} 
\end{array}
\right.
\]
We call $R$ {\em extremal} if $S=\emptyset$.
(This modifies the original definition in 
\cite{AF17} of relative quasi-difference set, 
to allow quasi-orthogonal 
coboundaries).

The next result is mostly Proposition~4.3 in 
\cite{AF17}. For each $\psi \in Z^2(G,{\mathbb Z}_2)$
we have a canonical central extension
$E_\psi$ with element set $\{ (\pm 1,g) \, |\, g\in G\}$
and multiplication defined by 
$(u,g)(v,h)=(uv\hspace{.5pt} \psi(g,h),gh)$.
\begin{proposition}\label{quasiort-quasidiff}
The cocycle $\psi$ is quasi-orthogonal if 
and only if $D=\{(1,g) \ | \ g\in G\}$ is a 
relative $(4t+2,2,4t+2,2t+1)$-quasi-difference 
set in $E_\psi$ with forbidden subgroup 
$\langle (-1,1)\rangle$, where $D$ is extremal 
for $\psi \in B^2(G,\mathbb{Z}_2)$.
\end{proposition}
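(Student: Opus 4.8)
The plan is to convert both sides of the asserted equivalence into statements about the row sums of $M_\psi$, using the explicit multiplication in $E_\psi$. Put $Z=\langle(-1,1)\rangle$. Since the cocycle equation forces $\psi(g,1)=\psi(1,g)=1$ for all $g$, we have $Z=\{(1,1),(-1,1)\}$ and the cosets of $Z$ in $E_\psi$ are exactly the pairs $\{(1,g),(-1,g)\}$, $g\in G$; hence $D=\{(1,g)\mid g\in G\}$ is a transversal for $Z$ in $E_\psi$ containing $1=(1,1)$, and $E_\psi\setminus Z=\{(u,g)\mid g\neq 1\}$. For $x=(u,g)$ with $g\neq 1$, an element $(1,k)\in D$ lies in $xD=\{(u\hspace{.5pt}\psi(g,h),gh)\mid h\in G\}$ precisely when $k=gh$ and $u\hspace{.5pt}\psi(g,h)=1$; so, writing $s_g=\sum_{h\in G}\psi(g,h)$ for the $g$-th row sum of $M_\psi$ and using $|\{h:\psi(g,h)=1\}|+|\{h:\psi(g,h)=-1\}|=4t+2$, I obtain the key identity
\[
|D\cap xD|\;=\;|\{h\in G:\psi(g,h)=u\}|\;=\;2t+1+us_g/2 .
\]

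Next I would read off what the defining conditions of a relative $(4t+2,2,4t+2,2t+1)$-quasi-difference set with forbidden subgroup $Z$ say about the $s_g$. Every element of $D$ has first coordinate $1$, so a candidate subset $S\subseteq D\setminus\{1\}$ has the form $\{(1,g)\mid g\in T\}$ with $T\subseteq G\setminus\{1\}$, and $SZ\setminus Z$ is the union of the cosets $\{(1,g),(-1,g)\}$ over $g\in T$. By the identity above, and since both $(1,g)$ and $(-1,g)$ lie in $E_\psi\setminus Z$, one has $|D\cap xD|\in\{2t,2t+1,2t+2\}$ for all $x\notin Z$ if and only if $|s_g|\in\{0,2\}$ for every $g\neq 1$; and when this holds, $|D\cap(u,g)D|=2t+1$ exactly when $s_g=0$. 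Thus $D$ is a relative quasi-difference set if and only if $|s_g|\in\{0,2\}$ for all $g\neq 1$ and $X_0=\{g\neq 1:s_g=0\}$ has cardinality $0$ or $2t+1$; in that event $S=\{(1,g)\mid g\in X_0\}$ works and $D$ is extremal precisely when $X_0=\emptyset$. Because the row indexed by $1_G$ has sum $4t+2\notin\{0,\pm 2\}$, we have $|X_0|+|X_2\cup X_{-2}|=4t+1$, so the extremal instance is exactly the condition $|X_2\cup X_{-2}|=4t+1$, and the non-extremal instance is exactly $|X_0|=2t+1$ and $|X_2\cup X_{-2}|=2t$.

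Comparing with Lemma~\ref{lemmaquasiortho} then closes the argument: if $\psi\in B^2(G,\mathbb{Z}_2)$ the Lemma says $\psi$ is quasi-orthogonal iff $|X_2\cup X_{-2}|=4t+1$, i.e.\ iff $D$ is an \emph{extremal} relative quasi-difference set, and if $\psi\notin B^2(G,\mathbb{Z}_2)$ it says $\psi$ is quasi-orthogonal iff $|X_0|=2t+1$ and $|X_2\cup X_{-2}|=2t$, i.e.\ iff $D$ is a (necessarily non-extremal) relative quasi-difference set. The only remaining point, needed for the direction ``$D$ a relative quasi-difference set $\Rightarrow\psi$ quasi-orthogonal'' where one does not know the cohomology type in advance, is to locate that type from $RE(M_\psi)=\sum_{g\neq 1}|s_g|$: in the non-extremal case $RE(M_\psi)=4t$, which by Proposition~\ref{OptimalRowExcess}(ii) is impossible for a coboundary, so $\psi\notin B^2(G,\mathbb{Z}_2)$ and Lemma~\ref{lemmaquasiortho} applies; in the extremal case $RE(M_\psi)=8t+2$, and here one must show that $|X_2\cup X_{-2}|=4t+1$ forces $\psi\in B^2(G,\mathbb{Z}_2)$. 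This last implication is, I expect, the main obstacle — it is exactly the part going beyond Proposition~4.3 of \cite{AF17}, whose definition of relative quasi-difference set excluded the extremal case — and I would derive it from the proof of Proposition~\ref{OptimalRowExcess}(ii), i.e.\ from the comparison of the attainable row excesses of coboundaries versus non-coboundaries over a group of order $4t+2$.
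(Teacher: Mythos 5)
Your reduction of both sides to the row sums $s_g$ is correct and is essentially the intended argument (the paper itself offers no proof, only the citation to Proposition~4.3 of \cite{AF17}, so your computation $|D\cap (u,g)D|=|\{h:\psi(g,h)=u\}|=2t+1+us_g/2$ and the ensuing dictionary between the quasi-difference-set conditions and $|X_0|$, $|X_2\cup X_{-2}|$ is a genuine gain in self-containedness). Your bookkeeping is accurate: $S$ is forced to be $\{(1,g)\mid g\in X_0\}$, extremality is $X_0=\emptyset$, and the two cases line up exactly with the two clauses of Lemma~\ref{lemmaquasiortho}. The non-extremal converse is also fully closed by your appeal to Proposition~\ref{OptimalRowExcess}(ii).

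The one step you leave open --- that an extremal $D$ (all $s_g=\pm 2$ for $g\neq 1$) forces $\psi\in B^2(G,\mathbb{Z}_2)$ --- is genuinely needed under the natural reading of the statement (otherwise a hypothetical non-coboundary with every row sum $\pm 2$ would make $D$ an extremal quasi-difference set while $RE(M_\psi)=8t+2>4t$ rules out quasi-orthogonality), and it does not follow from the \emph{statements} of Proposition~\ref{OptimalRowExcess} or Lemma~\ref{lemmaquasiortho}; you must actually prove it. The clean route: multiplying the cocycle identity (\ref{condiciondecociclo}) over $k\in G$ shows that $\chi(g):=\prod_{h\in G}\psi(g,h)$ is a homomorphism $G\to\{\pm1\}$, and $\chi$ is unchanged when $\psi$ is multiplied by a coboundary, so it depends only on $[\psi]\in H^2(G,\mathbb{Z}_2)\cong\mathbb{Z}_2$. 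For the trivial class $\chi\equiv 1$; for the non-trivial class, represented by inflation from $G/H\cong\mathbb{Z}_2$ ($H$ the normal $2$-complement), one computes $\chi(g)=(-1)^{2t+1}=-1$ for $g\notin H$, so $\chi$ is non-trivial. Since $\chi(g)=(-1)^{\nu_g}$ with $\nu_g$ the number of $-1$s in row $g$, i.e.\ $s_g=4t+2-2\nu_g$, one gets $s_g\equiv 2\ \mod 4$ for all $g$ iff $\chi\equiv 1$ iff $\psi\in B^2(G,\mathbb{Z}_2)$; in particular all $s_g=\pm 2$ forces $\psi$ to be a coboundary. Insert this and your proof is complete.
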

\begin{remark}
The requisite subset $S$ of $D$ corresponds 
to the rows of $M_\psi$ with  zero sum.
\end{remark}

\section{Generalized binary arrays with optimal autocorrelation}
\label{Section3}

Jedwab~\cite{Jed92} showed that a GPBA is
equivalent to an abelian relative difference set,
and Hughes~\cite{Hug00} identified its underlying 
orthogonal cocycle.
In this section we carry over these ideas 
into the setting of quasi-orthogonal cocycles. 
 
We start with an adaptation of some material 
from \cite{Hug00} and \cite{Jed92}. 
The cyclic group of order $m$ will be written 
additively, i.e.,
as $\mathbb Z_m = \{ 0, 1, \ldots , \allowbreak m-1\}$ 
under addition modulo $m$.
Let ${\bf s}= (s_1,\ldots,s_r)$ be an $r$-tuple
of positive integers greater than $1$,
and let $G={\mathbb{Z}}_{s_1}\times\cdots\times 
{\mathbb{Z}}_{s_r}$. 
A {\em binary ${\bf s}$-array}
is just a set map $\phi : G\rightarrow \{\pm 1\}$;
it has {\em energy} $n:=\prod_{i=1}^r s_i=|G|$.
We view a binary sequence as an $\bf s$-array 
with $r=1$.

Given $\bf s$ and a {\em type vector} 
${\bf z}=(z_1,\ldots,z_r)\in \{ 0,1\}^r$, let
$E= {\mathbb{Z}}_{(z_1+1)s_1}\times
\cdots\allowbreak \times {\mathbb{Z}}_{(z_r+1)s_r}$.
Then
\[
\begin{array}{l}
H=\{h\in E\mid  
h_i=0 \ \mbox{if}\ z_i=0,\  \mbox{and} 
\ h_i=0\ \mbox{or} \ s_i\ \mbox{if}\ z_i=1\}, \\
K=\{ k\in H\mid   k\,\mbox{ has even weight}\}
\end{array}
\]
are elementary abelian $2$-subgroups of $E$. 
Note that $E$ is a (central) extension of $H$ 
by $G$.
For ${\bf z}\neq {\bf 0}$ 
we obtain the short exact sequence
\begin{equation}
\label{sejed}
1 \longrightarrow \langle -1\rangle 
\stackrel{\iota}{\longrightarrow}
E/{K} \stackrel{\beta}{\longrightarrow} 
G \longrightarrow 0 ,
\end{equation}
where $\iota$ maps $-1$ to the generator 
of $H/K$ and $\beta(g+K) = g\, \mod\,{\bf s}$.
This sequence determines a cocycle
$f_{\bf z}\in Z^2(G,\langle -1\rangle)$ after 
choice of a transversal map
$\tau :\allowbreak  G\rightarrow \allowbreak E/K$.  
Specifically, set $\tau(x) = x+K$; then
\[
f_{\bf z}(x, y)= \iota^{-1}(\tau(x)+ \tau(y) 
- \tau(x+y)).
\] 
We can express $f_{\bf z}$ as a product of 
cocycles on cyclic groups.  
Define $\gamma_m\in Z^2(\mathbb{Z}_m,\langle-1\rangle)$
by $\gamma_m(j,k)= (-1)^{\lfloor (j+k)/m\rfloor}$, 
evaluating the exponent as an ordinary integer.
\begin{proposition}[{\cite[Lemma~3.1]{Hug00}}] 
\label{Proposition3}
\begin{itemize}
\item[{\rm (i)}] $f_{\bf z}(x,y)=\prod_{z_i=1}
\gamma_{s_i}(x_i,y_i)$.
\item[{\rm (ii)}] 
$f_{\bf z}\in B^2(G,\langle -1\rangle)$ if and 
only if $s_i$ is odd for all $i$ such that $z_i=1$.
\end{itemize}
\end{proposition}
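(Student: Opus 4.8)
The plan is to obtain (i) by unwinding the cocycle formula $f_{\bf z}(x,y)=\iota^{-1}(\tau(x)+\tau(y)-\tau(x+y))$ with $\tau(x)=x+K$, and then to deduce (ii) from (i) by a short cohomological argument. Throughout we assume ${\bf z}\neq{\bf 0}$, so that the sequence (\ref{sejed}) and the cocycle $f_{\bf z}$ are defined.

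First I would fix coordinates for (i). Write $x=(x_1,\ldots,x_r)$ with each $x_i$ its canonical representative in $\{0,1,\ldots,s_i-1\}$; the same tuple serves as the lift of $x$ into $E=\prod_i{\mathbb Z}_{(z_i+1)s_i}$, and $\tau(x)=x+K$. Then $\tau(x)+\tau(y)-\tau(x+y)$ is represented by the $r$-tuple whose $i$-th coordinate, computed in ${\mathbb Z}_{(z_i+1)s_i}$, is $x_i+y_i-\bigl((x_i+y_i)\mod s_i\bigr)$. When $z_i=0$ this is $0$ in ${\mathbb Z}_{s_i}$; when $z_i=1$, since $0\le x_i+y_i\le 2s_i-2$, it equals $s_i$ if $x_i+y_i\ge s_i$ and $0$ otherwise, i.e.\ $s_i\lfloor(x_i+y_i)/s_i\rfloor$ in ${\mathbb Z}_{2s_i}$. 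Hence $\tau(x)+\tau(y)-\tau(x+y)\in H$, and, because $K$ is exactly the even-weight subgroup of the elementary abelian group $H$, its image in $H/K\cong{\mathbb Z}_2$ is the parity of $\#\{i\mid z_i=1,\ x_i+y_i\ge s_i\}$. Applying $\iota^{-1}$, which sends the generator of $H/K$ to $-1$, gives $f_{\bf z}(x,y)=(-1)^{\sum_{z_i=1}\lfloor(x_i+y_i)/s_i\rfloor}=\prod_{z_i=1}\gamma_{s_i}(x_i,y_i)$, as claimed.

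For the ``if'' direction of (ii): by (i), $f_{\bf z}$ is the product, over the indices $i$ with $z_i=1$, of the inflations of $\gamma_{s_i}\in Z^2({\mathbb Z}_{s_i},\langle-1\rangle)$ along the coordinate projection $G\rightarrow{\mathbb Z}_{s_i}$. If every such $s_i$ is odd then $H^2({\mathbb Z}_{s_i},{\mathbb Z}_2)=0$, so each $\gamma_{s_i}$ is a coboundary on ${\mathbb Z}_{s_i}$; inflation sends coboundaries to coboundaries, and $B^2(G,\langle-1\rangle)$ is a subgroup of $Z^2(G,\langle-1\rangle)$, whence $f_{\bf z}\in B^2(G,\langle-1\rangle)$. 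For the ``only if'' direction I would argue contrapositively: if $z_j=1$ and $s_j$ is even, restrict $f_{\bf z}$ to the $j$-th direct factor ${\mathbb Z}_{s_j}\le G$. By (i) the inflations of $\gamma_{s_i}$ with $i\neq j$ restrict to the trivial cocycle there, so $f_{\bf z}$ restricts to $\gamma_{s_j}$, which is not a coboundary on ${\mathbb Z}_{s_j}$ because $s_j$ is even (equivalently, the extension $E_{\gamma_{s_j}}\cong{\mathbb Z}_{2s_j}$ of ${\mathbb Z}_2$ by ${\mathbb Z}_{s_j}$ is non-split). Since a coboundary always restricts to a coboundary, $f_{\bf z}\notin B^2(G,\langle-1\rangle)$.

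The main obstacle is the bookkeeping in (i): pinning down the transversal lift, checking that $\tau(x)+\tau(y)-\tau(x+y)$ lands in $H$ (not merely in $E$), and correctly tracking its class through the identification $H/K\cong\langle-1\rangle$, including the additivity of weight modulo $2$. Once (i) is in hand, part (ii) is a brief consequence of the decomposition of $G$ into cyclic direct factors together with the standard fact that $H^2({\mathbb Z}_m,{\mathbb Z}_2)$ is ${\mathbb Z}_2$ or $0$ according as $m$ is even or odd, with $\gamma_m$ generating it in the even case.
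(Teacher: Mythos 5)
Your proof is correct and self-contained; the paper itself gives no argument for this proposition, merely citing Lemma~3.1 of \cite{Hug00}, and your direct computation of $\tau(x)+\tau(y)-\tau(x+y)$ coordinate-by-coordinate, followed by the inflation/restriction argument using $H^2(\mathbb{Z}_m,\mathbb{Z}_2)\cong\mathbb{Z}_2$ or $0$ according to the parity of $m$, is essentially the standard verification carried out in that reference. Nothing is missing: you correctly check that the cocycle value lands in $H$, that weight mod $2$ is the homomorphism $H\to H/K$, and that restriction to the $j$-th cyclic factor detects non-triviality when some $s_j$ with $z_j=1$ is even.
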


Each cocycle $\psi\in Z^2(G, \langle -1\rangle)$  
has an associated short exact sequence
\begin{equation}\label{StdSES}
1\longrightarrow \langle -1\rangle 
\stackrel{\iota'}{\longrightarrow} E_{\psi} 
\stackrel{\beta'}{\longrightarrow} G 
\longrightarrow 0,
\end{equation}
where 
$\iota'(u)=(u,0)$ and $\beta'(u,x)=x$.
The following is standard.
\begin{proposition}\label{GammaRef}
If  $\psi$ and $f_{\bf z}$ are cohomologous, 
say $\psi=f_{\bf z}\partial\phi$, then 
{\em (\ref{sejed})} and {\em (\ref{StdSES})} 
are equivalent short exact
sequences: the isomorphism $\Gamma$ defined by
$(u,x) \mapsto \iota(u\phi(x))+\tau(x)$ makes the 
diagram
\[
\begin{array}{ccccccccc}
1 & \longrightarrow & \langle -1\rangle 
& \stackrel{\iota'}{\longrightarrow} & 
E_{\psi} & \stackrel{\beta'}{\longrightarrow} & G 
& \longrightarrow & 0\\[2mm]
& & \| & & 
{\tiny \mbox{$\Gamma$}} 
\big\downarrow\phantom{\Gamma} & & \|
 & &  \\[2mm]
1 & \longrightarrow & 
\langle -1\rangle 
& \stackrel{\iota}{\longrightarrow} & E/K 
& \stackrel{\beta}{\longrightarrow} & G & 
\longrightarrow & 0
\end{array}
\]
commute.
\end{proposition}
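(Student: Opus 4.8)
The plan is to verify by direct computation that $\Gamma$ is a group homomorphism, that it fits into the displayed diagram, and that it is bijective; $\Gamma$ is then an isomorphism of extensions, so that $(\ref{sejed})$ and $(\ref{StdSES})$ are equivalent. Formally one can appeal to the short five lemma, the two outer vertical maps being the identity, but bijectivity is easy enough to check by hand.

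First I would establish the homomorphism property. Multiplication in $E_\psi$ is $(u,x)(v,y)=(uv\,\psi(x,y),\,x+y)$, whereas $E/K$ is written additively, so $\Gamma\bigl((u,x)(v,y)\bigr)=\iota\bigl(uv\,\psi(x,y)\phi(x+y)\bigr)+\tau(x+y)$, while $\Gamma(u,x)+\Gamma(v,y)=\iota\bigl(uv\,\phi(x)\phi(y)\bigr)+\tau(x)+\tau(y)$ because $\iota$ is a homomorphism on $\langle-1\rangle$. The defining formula $f_{\bf z}(x,y)=\iota^{-1}(\tau(x)+\tau(y)-\tau(x+y))$ lets me rewrite $\tau(x)+\tau(y)$ as $\iota(f_{\bf z}(x,y))+\tau(x+y)$, so the two sides agree precisely when $\phi(x)\phi(y)f_{\bf z}(x,y)=\psi(x,y)\phi(x+y)$ in $\langle-1\rangle$. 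Since $\psi=f_{\bf z}\,\partial\phi$ unwinds to $\psi(x,y)=f_{\bf z}(x,y)\phi(x)^{-1}\phi(y)^{-1}\phi(x+y)$, substituting this reduces the required identity to $\phi(x+y)^2=1$ and $\phi(x)^{-1}=\phi(x)$, both of which hold because the target group is $\langle-1\rangle\cong\mathbb{Z}_2$; this is the only place where the ambient group being $\mathbb{Z}_2$ is used.

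Next I would check the two squares. For the left square, $\Gamma(\iota'(u))=\Gamma(u,0)=\iota(u\phi(0))+\tau(0)=\iota(u)$, using $\phi(0)=1$ and that $\tau(0)=K$ is the identity of $E/K$. For the right square, $\beta(\Gamma(u,x))=\beta(\iota(u\phi(x)))+\beta(\tau(x))=0+(x\bmod{\bf s})=x=\beta'(u,x)$, since $\mathrm{im}\,\iota=\ker\beta$ and $\beta(x+K)=x\bmod{\bf s}$, which is $x$ for $x\in G$.

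Finally, $\Gamma$ is injective: from $\beta\Gamma=\beta'$ one recovers the second coordinate $x$ from $\Gamma(u,x)$, and then $u$ since $\iota$ is injective. Because $|E_\psi|=2|G|$ and $|E/K|=2|G|$ as well (here $|E|=2^{w}|G|$ and $|K|=2^{w-1}$ with $w=\#\{i:z_i=1\}\geq 1$, as ${\bf z}\neq{\bf 0}$), $\Gamma$ is bijective, hence an isomorphism, and the diagram commutes by the two paragraphs above. There is no real obstacle in the argument; the only thing requiring care is the bookkeeping between the multiplicative groups $\langle-1\rangle$, $E_\psi$ and the additive groups $G$, $E/K$ throughout.
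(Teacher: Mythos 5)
Your proof is correct and is exactly the routine verification that the paper omits (it states the proposition with only the remark ``The following is standard'' and gives no proof). All three steps check out: the homomorphism computation correctly reduces to $\psi(x,y)\phi(x+y)=f_{\bf z}(x,y)\phi(x)\phi(y)$ via the definition of $\partial\phi$ and the fact that values lie in $\langle-1\rangle$, the two squares commute using $\phi(0)=1$ and $\mathrm{im}\,\iota=\ker\beta$, and the cardinality count $|E/K|=2^{w}|G|/2^{w-1}=2|G|$ settles bijectivity.
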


We broaden concepts defined earlier only 
for sequences. The {\em expansion}
of a \mbox{binary} $\bf s$-array $\phi$  
with respect to a type vector ${\bf z}$ 
is the map $\phi'$ on $E$ given by
\[
\phi'(g)=\left\{ \begin{array}{rl}
\phi(a) & \quad g\in a + K\\
-\phi(a) & \quad g\notin a +K
\end{array}\right.
\]
where $a$ denotes $g$ modulo ${\bf s}$. 
For any array $\varphi: A\rightarrow \{\pm 1\}$ 
and $x\in A$, let $R_{\varphi}(x)=
\sum_{a\in A} \varphi(a)\varphi(a+x)$.  
\begin{lemma}\label{PhidashFacts}
If $h\in H\setminus K$ then 
$\phi'(h+g) = -\phi'(g)$, 
and if $h\in K$ then 
$\phi'(h+g) = \allowbreak \phi'(g)$.
\end{lemma}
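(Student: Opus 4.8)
The plan is to unwind the definition of the expansion $\phi'$ together with the coset structure of $K$ inside $H$, and then reduce everything modulo $\mathbf{s}$. First I would fix $g\in E$ and write $a = g\,\mathrm{mod}\,\mathbf{s}\in G$, so that by definition $\phi'(g)=\phi(a)$ if $g\in a+K$ and $\phi'(g)=-\phi(a)$ otherwise. The crucial observation is that for any $h\in H$, the element $h+g$ also reduces to $a$ modulo $\mathbf{s}$, since every coordinate of $h$ is either $0$ or $s_i$ and thus vanishes modulo $s_i$. Hence $\phi'(h+g)$ equals $\pm\phi(a)$ with the same $\phi(a)$ as for $\phi'(g)$; only the sign can differ, and the sign is governed entirely by whether the relevant element lies in $a+K$.

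Next I would track that sign. We have $g\in a+K$ precisely when $g-a\in K$, and $h+g\in a+K$ precisely when $h+g-a\in K$. Since $H$ is an elementary abelian $2$-group (so $h = -h$) and $K\le H$ has index $2$ with $H/K$ generated by the image of any weight-one element — equivalently, $K$ consists of the even-weight elements of $H$ — membership in $K$ is detected by the parity of the weight. Writing $g-a$ in terms of the fixed coset, one checks that $h+g-a$ and $g-a$ lie in the same $K$-coset iff $h\in K$, and in opposite $K$-cosets iff $h\in H\setminus K$. Concretely: if $g\in a+K$ then $h+g\in a+K\iff h\in K$, while if $g\notin a+K$ (so $g-a$ has odd weight relative to the coset structure) then $h+g\in a+K\iff h\notin K$. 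In either case, passing from $g$ to $h+g$ flips the sign exactly when $h\in H\setminus K$ and preserves it exactly when $h\in K$. This yields $\phi'(h+g)=-\phi'(g)$ for $h\in H\setminus K$ and $\phi'(h+g)=\phi'(g)$ for $h\in K$, as claimed.

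I do not expect a serious obstacle here; this is essentially a bookkeeping lemma. The one point requiring a little care is making precise the phrase ``$g-a$ lies in a $K$-coset of $H$'': strictly, $g-a$ need not lie in $H$ at all (it lies in the kernel of reduction mod $\mathbf{s}$, which is $H$ by construction of $E$ — each coordinate of $g-a$ is a multiple of $s_i$ in $\mathbb{Z}_{(z_i+1)s_i}$, hence $0$ or $s_i$). So the first genuine step is to argue $g-a\in H$, after which the parity/weight argument applies verbatim. I would state that containment explicitly, then let the elementary-abelian and index-$2$ facts about $K\le H$ finish the proof in one line each for the two cases.
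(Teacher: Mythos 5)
Your argument is correct: the two key observations (that $h+g$ and $g$ have the same reduction $a$ modulo $\mathbf{s}$ because each $h_i\in\{0,s_i\}$, and that $g-a$ lies in $H$ so that membership of $h+g-a$ in the index-$2$ subgroup $K$ flips relative to that of $g-a$ exactly when $h\in H\setminus K$) are precisely what is needed, and you rightly flag the only delicate point, namely justifying $g-a\in H$. The paper states this lemma without any proof, treating it as routine bookkeeping adapted from Jedwab and Hughes, so your write-up simply supplies the intended verification; there is no divergence of approach to report.
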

\begin{corollary}\label{RphiNonZeroSplit}
$R_{\phi'}(g)
= |H| \sum_{x\in T} \phi'(x)\phi'(x+g)$
where $T$ is any transversal for $H$ in $E$.
\end{corollary}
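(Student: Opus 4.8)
The plan is to reduce the full autocorrelation sum over $E$ to a sum over the transversal by exploiting the coset structure of $H$ in $E$ together with Lemma~\ref{PhidashFacts}. First I would fix a transversal $T$ for $H$ in $E$ and partition $E$ into the disjoint cosets $x+H$, $x\in T$, so that
\[
R_{\phi'}(g)=\sum_{a\in E}\phi'(a)\phi'(a+g)
=\sum_{x\in T}\sum_{h\in H}\phi'(x+h)\phi'(x+h+g),
\]
where we pass freely between $x+h+g$, $h+x$, and $h+(x+g)$ since $E$ is abelian.

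Next I would evaluate the inner sum for fixed $x\in T$ and $g\in E$. Writing $\varepsilon(h)=1$ if $h\in K$ and $\varepsilon(h)=-1$ if $h\in H\setminus K$, Lemma~\ref{PhidashFacts} applied twice — once with $x$ and once with $x+g$ in the role of the lemma's "$g$" — gives $\phi'(x+h)=\varepsilon(h)\phi'(x)$ and $\phi'(x+h+g)=\varepsilon(h)\phi'(x+g)$. Hence $\phi'(x+h)\phi'(x+h+g)=\varepsilon(h)^2\phi'(x)\phi'(x+g)=\phi'(x)\phi'(x+g)$, which does not depend on $h$; summing over the $|H|$ elements of $H$ yields $\sum_{h\in H}\phi'(x+h)\phi'(x+h+g)=|H|\,\phi'(x)\phi'(x+g)$. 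Substituting back gives $R_{\phi'}(g)=|H|\sum_{x\in T}\phi'(x)\phi'(x+g)$, as claimed.

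Independence of the choice of $T$ is then automatic: the computation above shows that $\phi'(a)\phi'(a+g)$ is constant on each coset $a+H$, so $\sum_{x\in T}\phi'(x)\phi'(x+g)$ depends only on the set of cosets, not on the chosen representatives. There is no genuine obstacle here; the only point requiring care is the bookkeeping with the abelian addition in $E$ and verifying that Lemma~\ref{PhidashFacts} is invoked with the correct arguments.
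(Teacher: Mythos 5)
Your proof is correct and is exactly the argument the paper intends: the corollary is stated without proof as an immediate consequence of Lemma~\ref{PhidashFacts}, and your coset decomposition with the sign $\varepsilon(h)$ cancelling in the product is the evident way to supply it. No issues.
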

\begin{lemma}
\label{GammaTransmit}
The isomorphism $\Gamma$ in 
Proposition{\em ~\ref{GammaRef}} maps 
$\{(1,x)\ | \ x\in G\}\subseteq E_{\psi}$ 
onto $\{g+K\in E/K \mid \phi'(g)=1\}$.
\end{lemma}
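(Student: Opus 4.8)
The plan is to evaluate $\Gamma$ directly on the subset $\{(1,x)\mid x\in G\}$ and match the output against the prescribed set of $K$-cosets. First I would unpack the formula $(u,x)\mapsto\iota(u\phi(x))+\tau(x)$ from Proposition~\ref{GammaRef}: the transversal map is $\tau(x)=x+K$, and $\iota$ sends $1$ to the identity coset $K$ of $E/K$ and $-1$ to the unique nontrivial element of $H/K$, namely $h_0+K$ for any odd-weight $h_0\in H\setminus K$ (this is well defined since $H\setminus K$ is a single $K$-coset). Hence, for $x\in G$, one has $\Gamma(1,x)=x+K$ when $\phi(x)=1$ and $\Gamma(1,x)=(x+h_0)+K$ when $\phi(x)=-1$.

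Next I would describe the target. Because $\langle -1\rangle$ has order $2$ and $K$ has index $2$ in $H$, the cosets $a+K$ and $(a+h_0)+K$ with $a\in G$ are pairwise distinct and exhaust $E/K$ (distinctness follows from comparing the coordinate descriptions of $G$, $H$, and $K$; note in particular $G\cap K=\{0\}$). By Lemma~\ref{PhidashFacts}, $\phi'$ is constant on each $K$-coset, so it induces a well-defined function on $E/K$; evaluating at these representatives, and using that $a \mod {\bf s}=a$ while $(a+h_0) \mod {\bf s}=a$ with $h_0\notin K$, gives $\phi'(a+K)=\phi(a)$ and $\phi'((a+h_0)+K)=-\phi(a)$. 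In particular, for each $a\in G$ exactly one of $a+K$, $(a+h_0)+K$ has value $1$ under $\phi'$, so $\{g+K\mid\phi'(g)=1\}$ has size $|G|$.

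Comparing the two descriptions finishes the argument: a coset $g+K$ satisfies $\phi'(g)=1$ if and only if either $g+K=a+K$ with $\phi(a)=1$ or $g+K=(a+h_0)+K$ with $\phi(a)=-1$, and this is precisely the list of values $\Gamma(1,x)$ computed above (set $a=x$). Thus $\Gamma$ carries $\{(1,x)\mid x\in G\}$ onto $\{g+K\mid\phi'(g)=1\}$; injectivity of $\Gamma$ together with the cardinality count re-confirms that the restriction is a bijection, though the explicit set equality already suffices. I expect the only real friction to be notational — keeping straight the embedding of $G$ into $E$, the reduction modulo ${\bf s}$ in the definition of $\phi'$, and the identification of $\iota(-1)$ with the $K$-coset of an odd-weight element of $H$ — and I would note that the standing hypothesis ${\bf z}\neq{\bf 0}$ is exactly what makes sequence~{\rm (\ref{sejed})}, and hence $\Gamma$, available.
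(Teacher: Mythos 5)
Your proposal is correct and follows essentially the same route as the paper's proof: both evaluate $\Gamma((1,x))=\iota(\phi(x))+\tau(x)$ explicitly using $\tau(x)=x+K$ and the identification of $\iota(-1)$ with the nontrivial coset of $H/K$, and both invoke Lemma~\ref{PhidashFacts} together with the definition of the expansion $\phi'$ to match the image with $\{g+K\mid\phi'(g)=1\}$. Your write-up is just a more explicit, two-sided set-equality version of the paper's forward-and-converse argument, with an optional cardinality check added.
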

\begin{proof}
(Cf.~\cite[p.~330]{Hug00}.)
Let $\phi'(g)=1$ and write $a$ 
for $g$ modulo ${\bf s}$;  then
$g+K=\allowbreak \iota(\phi(a))+a+K=\Gamma((1,a))$.
Conversely, $\Gamma((1,x))=h + x+K$ where $h +K$ is 
the generator of $H/K$ if $\phi(x)=-1$ and 
$h=0$ otherwise. By Lemma~\ref{PhidashFacts}, 
$\phi'(h + x)=\allowbreak 1$. \hfill $\Box$
\end{proof}

The $\bf s$-array $\phi$ is a {\em GPBA$({\bf s})$ 
of type ${\bf z}$} if  
\[
R_{\phi'}(g)=0 \quad 
\forall \hspace{1pt} g \in E\setminus H.
\]
When ${\bf z}=0$, this condition becomes 
(by Corollary~\ref{RphiNonZeroSplit})
\[
R_{\phi}( g )
=0 \quad  \forall \hspace{1pt}
g \in G\setminus \{ 0\}.
\]
In the latter event $\phi$ is a PBA; which is 
equivalent to $\partial \phi$ being orthogonal
(we return to this case later in the section).
More generally, a GPBA$({\bf s})$ 
is equivalent to a relative difference set in 
$E/ K$ relative to $H/K$, hence
equivalent also to a  
cocyclic Hadamard matrix over $G$:
see \cite[Theorem~5.3]{Hug00} 
and \cite[Theorem~3.2]{Jed92}.
So a GPBA can exist only if its energy $n$ is
$2$ or a multiple of $4$. 
Theorems~\ref{th-goba-rqds} and \ref{qc-goba} 
below are analogous results for 
$n\equiv 2 \mod 4$.

Assume that $|G|=4t+2>2$ unless stated otherwise.
Let $s_1/2, s_2, \ldots, s_r$ be odd.
Thus, if $z_1=0$ then $E$ splits over $H$
by Proposition~\ref{Proposition3}, and so
$R_{\phi'}$ is never zero by
Corollary~\ref{RphiNonZeroSplit}
and Lemma~\ref{PhidashFacts}.
\begin{definition}\label{defgoba}
{\em  A {\em GOBA$({\bf s})$ 
of type ${\bf z}$} is a 
binary $\bf s$-array $\phi$ such that}
\begin{itemize}
\item[{\em (i)}]
$R_{\phi'}(g)\in
\{ 0, \pm 2 |H|\}$ $\ \, \forall 
\hspace{1pt} g \in E\setminus H$,
\end{itemize}
{\em and if $z_1 =1$ then}
\begin{itemize}
\item[{\em (ii)}] 
$|\{g\in E \ | \ R_{\phi'}(g) = 0 \}|=|E|/2$.
\end{itemize}
\end{definition}

A GOBS as defined in Section~\ref{Introduction} 
is a GOBA($\bf s)$ with $r=z_1=1$.
When ${\bf z}={\bf 0}$, Definition~\ref{defgoba}
reduces to
\[
R_\phi(g) =\pm 2 \quad \forall 
\hspace{1pt} g \in G\setminus \{ 0\};
\]
we call $\phi$ satisfying this condition
an \textit{optimal binary array} (OBA). 
\begin{lemma}[{\cite[Lemma~3.1]{Jed92}}]
\label{corredif}
For any array $\varphi: A\rightarrow \{\pm 1\}$, 
\[
R_\varphi(x)= |A|+ 4(d_{\varphi}(x)
-|N_{\varphi}|)
\]
where $N_{\varphi}=\{a\in A \,|\,  
\varphi(a)=-1\}$ and 
$d_{\varphi}(x)=
|N_{\varphi}\cap (x+N_{\varphi})|$.
\end{lemma}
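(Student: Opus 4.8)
The plan is to reduce the identity to a single elementary count by passing from $\varphi$ to the characteristic function of $N_\varphi$. Write $\chi\colon A\rightarrow\{0,1\}$ for the indicator of $N_\varphi$, so that $\varphi(a)=1-2\chi(a)$ for every $a\in A$. Substituting this into $R_\varphi(x)=\sum_{a\in A}\varphi(a)\varphi(a+x)$ and expanding the product $(1-2\chi(a))(1-2\chi(a+x))$ splits the sum into four pieces, and the whole proof is the evaluation of those pieces.

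Three of the four sums are immediate: $\sum_{a\in A}1=|A|$, while $\sum_{a\in A}\chi(a)=|N_\varphi|$ and $\sum_{a\in A}\chi(a+x)=|N_\varphi|$, the latter because $a\mapsto a+x$ permutes $A$. For the remaining cross term, $\sum_{a\in A}\chi(a)\chi(a+x)$ counts the $a\in A$ with $a\in N_\varphi$ and $a+x\in N_\varphi$, i.e.\ with $a\in N_\varphi\cap(N_\varphi-x)$; translating this set by $x$ gives $|N_\varphi\cap(N_\varphi-x)|=|(x+N_\varphi)\cap N_\varphi|=d_\varphi(x)$. Assembling the four contributions yields $R_\varphi(x)=|A|-2|N_\varphi|-2|N_\varphi|+4d_\varphi(x)=|A|+4(d_\varphi(x)-|N_\varphi|)$, which is the claimed formula. (An equivalent route partitions $A$ into the four classes determined by membership of $a$ and of $a+x$ in $N_\varphi$, writes $R_\varphi(x)$ as the alternating sum of the four class sizes, and eliminates using $\sum(\cdot)=|A|$ and the two relations $|\{a:a\in N_\varphi\}|=|\{a:a+x\in N_\varphi\}|=|N_\varphi|$; it lands on the same equation.)

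I do not expect a genuine obstacle here: the only points that deserve a word of care are the reindexing $\sum_{a\in A}\chi(a+x)=|N_\varphi|$ and the identification of the cross term with $d_\varphi(x)$, and both rest solely on $A$ being a (written additively) group on which $a\mapsto a+x$ is a bijection — precisely the standing hypothesis wherever $R_\varphi$ appears in the paper.
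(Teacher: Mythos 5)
Your proof is correct, and it is exactly the ``routine counting'' that the paper's one-line proof alludes to: the substitution $\varphi=1-2\chi$ and the identification of the cross term with $d_\varphi(x)$ are the standard way to carry out that count. No discrepancy with the paper's approach.
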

\begin{proof}
Routine counting. \hfill $\Box$
\end{proof}

\begin{theorem}
\label{th-goba-rqds}
Let $\phi$ be a binary ${\bf s}$-array, 
${\bf z}$ be a non-zero type vector, and 
$D=\{g+K\in E/K \mid  \phi'(g)=-1\}$.
Then $\phi$ is a \mbox{GOBA}$({\bf s})$ of type 
${\bf z}$ if and only if $D$ is a relative 
$(4t+2,2,4t+2,2t+1)$-quasi-difference set in $E/K$ 
with forbidden subgroup $H/K$; furthermore, $D$ is
extremal if $z_1=0$.
\end{theorem}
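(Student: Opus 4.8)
The plan is to translate the two parts of Definition~\ref{defgoba} directly into the relative quasi-difference set conditions on $D$ by means of the ``correlation versus overlap'' identities already available, and then to pin down the size of the exceptional subset of $D$ by passing to the cocycle $f_{\bf z}\partial\phi$.

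First I would record the structural facts and the key identity. Write $\bar g$ for $g+K$, and let $\bar\phi'$ be the map on $E/K$ induced by $\phi'$ — well defined by the second clause of Lemma~\ref{PhidashFacts} — so that $D$ is exactly the negative set of $\bar\phi'$. By the first clause of Lemma~\ref{PhidashFacts}, for $h\in H\setminus K$ the set $D$ contains exactly one of $\bar g$ and $\bar h+\bar g$ for every $g$, so $D$ is automatically a transversal for $H/K$ in $E/K$, with $|D|=|E/K|/2=4t+2$. Since $R_{\phi'}(g)=|K|R_{\bar\phi'}(\bar g)$ by $K$-invariance of $\phi'$ (or via Corollary~\ref{RphiNonZeroSplit}), Lemma~\ref{corredif} applied to $\bar\phi'$ gives
\[
R_{\phi'}(g)=2|H|\bigl(|D\cap(\bar g+D)|-(2t+1)\bigr),\qquad g\in E.
\]
As Definition~\ref{defgoba}(i) requires $R_{\phi'}(g)\in\{0,\pm 2|H|\}$ only for $g\notin H$, i.e.\ for $\bar g\notin H/K$, the identity makes it equivalent to $|D\cap(\bar x+D)|\in\{2t,2t+1,2t+2\}$ for all $\bar x\in(E/K)\setminus(H/K)$. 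Because $D$ is a transversal, $\bar h+D=(E/K)\setminus D$, so the overlap at $\bar x+\bar h$ equals $4t+2$ minus the overlap at $\bar x$; hence the set $\mathcal E$ of $\bar x\notin H/K$ with overlap $2t+1$ is a union of cosets of $H/K$, none of them $H/K$ itself, and writing $S$ for the representatives in $D$ of those cosets we get $S\subseteq D\setminus\{0+K\}$, $\mathcal E=S+(H/K)$ and $|\mathcal E|=2|S|$. Thus Definition~\ref{defgoba}(i) holds precisely when $D$ meets every requirement for a relative $(4t+2,2,4t+2,2t+1)$-quasi-difference set in $E/K$ with forbidden subgroup $H/K$ except possibly $|S|\in\{0,2t+1\}$; and since the identity gives $R_{\phi'}(g)=0$ iff $\bar g\in\mathcal E$ (the overlap being $0$ or $4t+2$ on $H/K$), Definition~\ref{defgoba}(ii) becomes $|K|\cdot 2|S|=|E|/2$, i.e.\ $|S|=2t+1$.

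What remains is to fix $|S|$, which is the crux. If $z_1=0$ then, as recorded just before Definition~\ref{defgoba}, $E$ splits over $H$ and $R_{\phi'}$ is never zero, so $\mathcal E=S=\emptyset$; hence $\phi$ is a GOBA$({\bf s})$ of type ${\bf z}$ — which for $z_1=0$ is simply condition (i) — if and only if $D$ is a relative quasi-difference set, and such a $D$ is automatically extremal, which is both the equivalence and the ``furthermore'' clause. If $z_1=1$, then a GOBA satisfies (i) and (ii), hence the quasi-difference set conditions with $|S|=2t+1$; for the converse one must rule out $|S|=0$, i.e.\ show that a relative quasi-difference set $D$ of this form cannot be extremal when $z_1=1$, and this is where the cocycle enters. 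Put $\psi=f_{\bf z}\partial\phi$ and use the isomorphism $\Gamma\colon E_\psi\to E/K$ of Proposition~\ref{GammaRef}: by Lemma~\ref{GammaTransmit}, $\Gamma$ maps $\{(-1,g):g\in G\}$ onto $D$ and $\langle(-1,0)\rangle$ onto $H/K$, and left translation by the central element $(-1,0)$ (harmless for the overlap numbers and for the size of the associated subset) shows that $D$ is a relative $(4t+2,2,4t+2,2t+1)$-quasi-difference set in $E/K$ with forbidden subgroup $H/K$ if and only if $\{(1,g):g\in G\}$ is one in $E_\psi$ with forbidden subgroup $\langle(-1,0)\rangle$, which by Proposition~\ref{quasiort-quasidiff} holds if and only if $\psi$ is quasi-orthogonal. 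Then, by the remark after Proposition~\ref{quasiort-quasidiff}, the associated subset has the same size as the set $X_0$ of zero-sum rows of $M_\psi$ (notation of Lemma~\ref{lemmaquasiortho}), which by Lemma~\ref{lemmaquasiortho} has $2t+1$ elements when $\psi\notin B^2(G,\mathbb{Z}_2)$ and none when $\psi\in B^2(G,\mathbb{Z}_2)$. Since $\partial\phi\in B^2(G,\mathbb{Z}_2)$ and $s_1$ is even, Proposition~\ref{Proposition3}(ii) gives $\psi\in B^2(G,\mathbb{Z}_2)\iff f_{\bf z}\in B^2(G,\mathbb{Z}_2)\iff z_1=0$; so when $z_1=1$ any such $D$ is non-extremal, $|S|=2t+1$, hence (ii), and in general $D$ is extremal exactly when $z_1=0$. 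The main obstacle is precisely this last part: condition (i) alone does not determine $|S|$, so it cannot be matched against the quasi-difference set definition — which permits $|S|\in\{0,2t+1\}$ — by elementary counting; it is the identification of $|S|$ with the number of zero rows of $M_{f_{\bf z}\partial\phi}$, via $\Gamma$ and Proposition~\ref{quasiort-quasidiff}, that excludes $|S|=0$ when $z_1=1$.
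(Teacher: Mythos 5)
Your proof is correct, and its core is the same translation the paper uses: Lemma~\ref{corredif} applied modulo $K$ turns $R_{\phi'}$ into the overlap numbers $|D\cap(\bar x+D)|$, condition (i) becomes the $\{2t,2t+1,2t+2\}$ constraint, and the count $|S|=|R_{\phi'}^{-1}(0)|/2|K|$ matches condition (ii) with $|S|=2t+1$. Where you genuinely diverge is at what you rightly call the crux: in the backward direction with $z_1=1$, the definition of a relative quasi-difference set permits $|S|\in\{0,2t+1\}$, and the paper's proof stops at ``$|S|=2t+1$ if and only if (ii) holds'' without explaining why the extremal case $|S|=0$ cannot occur; your detour through $\Gamma$, Proposition~\ref{quasiort-quasidiff}, the remark on zero-sum rows, Lemma~\ref{lemmaquasiortho}, and Proposition~\ref{Proposition3}(ii) (using that $s_1$ is even, so $f_{\bf z}\partial\phi\in B^2(G,\mathbb{Z}_2)$ exactly when $z_1=0$) closes that case explicitly, and the same computation delivers the ``furthermore'' clause for $z_1=0$. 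The paper instead gets extremality for $z_1=0$ from the observation preceding Definition~\ref{defgoba} that $R_{\phi'}$ never vanishes in the split case, and leaves the $z_1=1$ exclusion implicit. So your argument buys completeness (and makes Theorem~\ref{qc-goba} an easier corollary, since you have already identified $D$ with the canonical quasi-difference set in $E_\psi$), at the cost of invoking the cocycle machinery inside a proof the paper keeps purely combinatorial; just take care that Proposition~\ref{quasiort-quasidiff} and Lemma~\ref{lemmaquasiortho} are imported from \cite{AF17} independently of this theorem, so no circularity arises.
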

\begin{proof}
We continue with the notation of 
Lemma~\ref{corredif}.
By Lemma~\ref{GammaTransmit},
$D$ is a full transversal for $H/K$ in $E/K$.
Also, $|N_{\phi'}|=\allowbreak |E|/2$
by Lemma~\ref{PhidashFacts}; 
thus $|D| = \allowbreak |N_{\phi'}|/|K|$. 

For each $g\not \in H$, denote
$|D\cap (g+K+D)|$ by $d_{D}(g+ K)$: this is 
the number of $x+K\in D$ such that 
$x-g+K\in D$.
Since $d_{D}(g+ K)= d_{\phi'}(g)/|K|$,
Lemma~\ref{corredif} implies that
\begin{equation}\label{equivryd}
\begin{array}{ccl}
R_{\phi'}( g )=-2|H| \, & \
\Leftrightarrow \ & \ d_{D}( g+K)=2t\\
R_{\phi'}( g )=0 \, & \
\Leftrightarrow \ & \ d_{D}( g+K )=2t+1\\
 R_{\phi'}( g )=2|H| \, & \
 \Leftrightarrow \  & \ d_{D}( g+ K )=2t+2.
\end{array}
\end{equation}
Let $S=\{g+K\in D \mid  R_{\phi'}(g)=0\}$.
According to (\ref{equivryd}), 
Definition~\ref{defgoba}~(i) holds if and only if
\[
d_{D}(g+K)=
\left\{\begin{array}{ll}
2t+1 & \quad  g+K\in S +H/K\\
2t\ \  {\rm or}\ \  2t+2 & \quad 
\mbox{otherwise}.
\end{array}\right.
\]
Lemma~\ref{PhidashFacts} yields
\[
|S|=\frac{|\{g+K\in E/K\ | \  
R_{\phi'}(g)=0\}|}{2}=
|R_{\phi'}^{-1}(0)|/2|K|.
\]
Thus $|S| = 2t+1$ for $z_1=1$ if and only if  
Definition~\ref{defgoba}~(ii) holds.
\hfill $\Box$
\end{proof}
\begin{remark}\label{DandCompD}
Theorem~\ref{th-goba-rqds} remains valid 
when $D$ is replaced by its complement 
$\{g+K\in E/K \; | \; \allowbreak  \phi'(g)=
\allowbreak 1\}$.
\end{remark}
\begin{theorem}
\label{qc-goba} 
A (normalized) binary ${\bf s}$-array $\phi$ is 
a GOBA$({\bf s})$ 
of type ${\bf z}\not={\bf 0}$ if and only if 
$f_{\bf z}\partial\phi$ is quasi-orthogonal.
\end{theorem}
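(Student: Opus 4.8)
The plan is to prove Theorem~\ref{qc-goba} by linking the two equivalences we have already established: Theorem~\ref{th-goba-rqds} characterizes GOBA$({\bf s})$'s of type ${\bf z}\neq{\bf 0}$ via relative quasi-difference sets in $E/K$, while Proposition~\ref{quasiort-quasidiff} characterizes quasi-orthogonal cocycles via relative quasi-difference sets in the canonical extension $E_\psi$. So the strategy is to set $\psi = f_{\bf z}\partial\phi$ and transport the quasi-difference set from $E/K$ to $E_\psi$ (or vice versa) through the isomorphism $\Gamma$ of Proposition~\ref{GammaRef}, checking that the forbidden subgroup and the extremality condition match up correctly under this transport.

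First I would observe that $\psi = f_{\bf z}\partial\phi$ is by construction cohomologous to $f_{\bf z}$, so Proposition~\ref{GammaRef} applies: the short exact sequences (\ref{StdSES}) for $\psi$ and (\ref{sejed}) for $f_{\bf z}$ are equivalent via $\Gamma\colon E_\psi \to E/K$, $(u,x)\mapsto \iota(u\phi(x)) + \tau(x)$. Next I would use Lemma~\ref{GammaTransmit}, which says $\Gamma$ sends $\{(1,x)\mid x\in G\}$ onto $\{g+K\mid \phi'(g)=1\}$; hence $\Gamma$ sends the complement $\{(-1,x)\mid x\in G\}$ onto $D = \{g+K\mid\phi'(g)=-1\}$. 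Since $\Gamma$ carries the forbidden subgroup $\langle(-1,0)\rangle = \langle\iota'(-1)\rangle$ of $E_\psi$ onto $H/K = \langle\iota(-1)\rangle$ (commutativity of the diagram), and carries the set $\{(1,x)\mid x\in G\}$ onto a transversal for $H/K$, it is a group isomorphism matching forbidden subgroups and matching the relevant $k$-subsets. Therefore $\{(1,x)\mid x\in G\}$ is a relative $(4t+2,2,4t+2,2t+1)$-quasi-difference set in $E_\psi$ with forbidden subgroup $\langle(-1,0)\rangle$ if and only if $D$ is one in $E/K$ with forbidden subgroup $H/K$ --- using Remark~\ref{DandCompD} to pass between $D$ and its complement as needed so that Proposition~\ref{quasiort-quasidiff} (which uses $D_\psi = \{(1,g)\}$) lines up with Theorem~\ref{th-goba-rqds} (which uses $D = \phi'^{-1}(-1)$).

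Having matched the quasi-difference set conditions, I would then chain the equivalences: $\phi$ is a GOBA$({\bf s})$ of type ${\bf z}\neq{\bf 0}$ $\iff$ $D$ is a relative $(4t+2,2,4t+2,2t+1)$-quasi-difference set in $E/K$ with forbidden subgroup $H/K$ (Theorem~\ref{th-goba-rqds}) $\iff$ $\{(1,g)\mid g\in G\}$ is such a set in $E_\psi$ with forbidden subgroup $\langle(-1,1)\rangle$ (the transport via $\Gamma$) $\iff$ $\psi = f_{\bf z}\partial\phi$ is quasi-orthogonal (Proposition~\ref{quasiort-quasidiff}). The extremality bookkeeping also matches: Theorem~\ref{th-goba-rqds} gives extremality precisely when $z_1=0$, and by Proposition~\ref{Proposition3}(ii), $f_{\bf z}\in B^2(G,\langle-1\rangle)$ exactly when every $s_i$ with $z_i=1$ is odd, which under our standing hypothesis (that $s_1/2,s_2,\dots,s_r$ are odd, so $s_1$ is even) happens precisely when $z_1=0$; and Proposition~\ref{quasiort-quasidiff} attaches extremality of $D$ to $\psi\in B^2(G,\mathbb{Z}_2)$ --- so the two extremality conditions are the same condition on ${\bf z}$, and since $\partial\phi$ is a coboundary, $f_{\bf z}\partial\phi \in B^2$ iff $f_{\bf z}\in B^2$.

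The main obstacle I anticipate is the bookkeeping in reconciling the two formulations of the relative quasi-difference set --- Theorem~\ref{th-goba-rqds} phrases things in terms of $D=\phi'^{-1}(-1)$ inside $E/K$, while Proposition~\ref{quasiort-quasidiff} uses $D=\{(1,g)\}$ inside $E_\psi$, and $\Gamma$ actually sends $\{(1,g)\}$ to the \emph{complement} of $D$ in the Theorem's notation. Getting the signs, the complement (Remark~\ref{DandCompD}), and the ``$S$ corresponds to zero-sum rows'' correspondence all consistent requires care, but it is routine once the dictionary is set up. A secondary point to verify cleanly is that $\Gamma$ being an \emph{equivalence of extensions} (not merely an abstract isomorphism $E_\psi\cong E/K$) is what guarantees the forbidden subgroups correspond and that ``transversal for the forbidden subgroup'' is preserved --- this is exactly the commutativity of the diagram in Proposition~\ref{GammaRef}, so no new work is needed there.
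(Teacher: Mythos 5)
Your proposal is correct and follows exactly the route the paper takes: its proof is a one-line citation of Theorem~\ref{th-goba-rqds}, Remark~\ref{DandCompD}, Proposition~\ref{quasiort-quasidiff}, and Lemma~\ref{GammaTransmit}, and your write-up simply fills in the transport via $\Gamma$ and the extremality bookkeeping that the paper leaves implicit.
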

\begin{proof}
This is a consequence of Theorem~\ref{th-goba-rqds},
Remark~\ref{DandCompD}, 
Proposition~\ref{quasiort-quasidiff}, 
and Lemma~\ref{GammaTransmit}.  
\hfill $\Box$
\end{proof}  

We proceed to formulate `base' cases of 
Theorems~\ref{th-goba-rqds} and \ref{qc-goba}. 
Let $\partial \phi\in B^2(G,\mathbb{Z}_2)$.
Since $M_{\partial \phi}$ is Hadamard 
equivalent to a group-developed matrix, and 
such a matrix has constant row sum, 
$\partial \phi$ can be orthogonal only if
$|G|$ is square. 
This situation has been extensively studied.
\begin{theorem}
\label{BaseCase}
Let $|G|= 4u^2$, and let
$D$ be a subset of $G$ of size $2u^2-\allowbreak u$. 
Define $R=\{(\phi(g),g)\, |\,  g\in G\}\subset
{\mathbb{Z}}_2\times G$ where 
$\phi: G\rightarrow \{\pm 1\}$ is the characteristic 
function of $D$. Then the following 
are equivalent.
\begin{itemize}
\item[{\em (i)}] $\partial \phi$ is orthogonal.
\item[{\em (ii)}] $D$ 
is a Menon-Hadamard difference set in $G$.
\item[{\em (iii)}] $R$ 
is a relative $(4u^2,2,4u^2,2u^2)$-difference set 
in ${\mathbb{Z}}_2\times G$ with forbidden subgroup  
${\mathbb{Z}}_2\times \{1_G\}$.
\item[{\em (iv)}] $\phi$
is a perfect nonlinear function.
\end{itemize}
If $G$ is abelian then {\rm (i)} -- {\rm (iv)} 
are further equivalent to
\begin{itemize}
\item[{\em (v)}] $\phi$ is a PBA.
\end{itemize}
\end{theorem}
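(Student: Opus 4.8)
The plan is to route all five conditions through the single assertion
\[
(\ast)\qquad |D\cap aD|=u^{2}-u\quad\mbox{for every}\ a\in G\setminus\{1_{G}\},
\]
i.e.\ that $D$ is a $(4u^{2},2u^{2}-u,u^{2}-u)$-difference set in $G$, which is precisely what (ii) asserts. The engine is the elementary count behind Lemma~\ref{corredif}: for any $a\in G$,
\[
\sum_{x\in G}\phi(x)\phi(ax)=|G|-4|D|+4\,|D\cap a^{-1}D|=4\bigl(|D\cap a^{-1}D|-(u^{2}-u)\bigr),
\]
using $|G|=4u^{2}$ and $|D|=2u^{2}-u$; so this ``autocorrelation at $a$'' vanishes exactly when $|D\cap a^{-1}D|=u^{2}-u$, and since $a\mapsto a^{-1}$ permutes $G\setminus\{1_{G}\}$ the condition ``$\sum_{x}\phi(x)\phi(ax)=0$ for all $a\neq1_{G}$'' is exactly $(\ast)$. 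I may assume $1_{G}\notin D$, replacing $D$ by a left translate if needed: $(\ast)$ and each of (i)--(v) is visibly invariant under translation, so this is harmless and makes $\phi$ normalized.

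For (i)$\Leftrightarrow$(ii) I would use the factorization noted just before the theorem: $M_{\partial\phi}=\mathrm{diag}(\phi(g))_{g}\cdot[\phi(gh)]_{g,h}\cdot\mathrm{diag}(\phi(h))_{h}$, so $M_{\partial\phi}$ is Hadamard iff the group-developed matrix $N=[\phi(gh)]_{g,h}$ is; and the off-diagonal entries of $NN^{\top}$ are $\sum_{h}\phi(gh)\phi(g'h)=\sum_{k}\phi(k)\phi((g'g^{-1})k)$, so, by the displayed identity, $N$ is Hadamard iff $(\ast)$. For (iv)$\Leftrightarrow$(ii): $\phi\colon G\to\mathbb{Z}_{2}$ is perfect nonlinear iff each derivative $x\mapsto\phi(ax)\phi(x)^{-1}$ ($a\neq1_{G}$) is balanced on $\mathbb{Z}_{2}$, i.e.\ iff $\sum_{x}\phi(ax)\phi(x)=0$ for all $a\neq1_{G}$, which is $(\ast)$; this is in any case the classical dictionary between $\mathbb{Z}_{2}$-valued perfect nonlinear functions and Menon--Hadamard difference sets. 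For (v): when $G$ is abelian, $\phi$ is a PBA iff $R_{\phi}(g)=\sum_{a}\phi(a)\phi(a+g)=0$ for all $g\neq0$, again $(\ast)$ --- and the notion of $\mathbf{s}$-array, hence of PBA, requires $G$ abelian, which is exactly why (v) is stated only under that hypothesis.

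For (iii)$\Leftrightarrow$(ii) there is a little genuine computation. First, $R$ is a \emph{full} transversal for $Z:=\mathbb{Z}_{2}\times\{1_{G}\}$ in $\mathbb{Z}_{2}\times G$, of size $|G|=4u^{2}=k$, so only the numbers $|R\cap xR|$ for $x\in(\mathbb{Z}_{2}\times G)\setminus Z$ need checking. Writing $x=(\epsilon,h)$ with $h\neq1_{G}$, one has $xR=\{(\epsilon\phi(g),hg)\mid g\in G\}$, so $(\phi(g'),g')\in R\cap xR$ iff $\phi(g')=\epsilon\,\phi(h^{-1}g')$; hence
\[
|R\cap xR|=\#\{\,g'\in G\mid \phi(g')\phi(h^{-1}g')=\epsilon\,\}=2u^{2}+2\epsilon\bigl(|D\cap hD|-(u^{2}-u)\bigr)
\]
by the identity above. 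This equals the required $\lambda=2u^{2}$ for \emph{all} non-forbidden $x$ (both signs $\epsilon$, all $h\neq1_{G}$) precisely when $(\ast)$ holds; conversely $\epsilon=+1$ alone forces $|D\cap hD|=u^{2}-u$ for every $h\neq1_{G}$. Chaining the four equivalences through $(\ast)$ proves the theorem.

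The routine parts are the counting identity and items (iv)--(v); the place that wants care is the non-abelian bookkeeping --- keeping left versus right translates straight in (i) and (iii), and verifying that every object in sight is translation-invariant so the reduction to normalized $\phi$ is legitimate. I expect (iii) to be the most error-prone step and would write it out in full, while (i) can lean on the group-developed reduction already flagged in the text and (iv)--(v) on the standard correspondence between perfect nonlinear functions, PBAs, and Menon--Hadamard difference sets.
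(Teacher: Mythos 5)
Your proposal is correct, but it takes a genuinely different route from the paper: the paper does not prove Theorem~\ref{BaseCase} at all in the usual sense, instead citing \cite[Theorem~1]{Pot04} for (iii) $\Leftrightarrow$ (iv) and Theorem~2.6 and Lemma~2.10 of \cite{DFH00} for the remaining equivalences. You give a self-contained elementary argument that funnels all five conditions through the single statement that $D$ is a $(4u^{2},2u^{2}-u,u^{2}-u)$-difference set, powered by the counting identity $\sum_{x}\phi(x)\phi(ax)=4\bigl(|D\cap a^{-1}D|-(u^{2}-u)\bigr)$ (which is sign-convention-independent, as one checks). I verified your computations: the diagonal-conjugation reduction of $M_{\partial\phi}$ to the group-developed matrix $[\phi(gh)]$ for (i), the balancedness dictionary for (iv), the specialization to $R_{\phi}$ for (v), and the count $|R\cap xR|=2u^{2}+2\epsilon\bigl(|D\cap hD|-(u^{2}-u)\bigr)$ for (iii) are all right, including the non-abelian bookkeeping. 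What your approach buys is transparency and independence from the literature; what the paper's approach buys is brevity and consistency with its companion result Theorem~\ref{QOBaseCase}, whose proof it parallels. Two small points to tidy: the paper's own normalization device is complementation of $D$ (per the remark following the theorem), whereas you translate $D$; your translation argument does preserve $(\ast)$ since $|aD\cap haD|=|D\cap(a^{-1}ha)D|$ and conjugation permutes $G\setminus\{1_{G}\}$, but you should say this explicitly rather than call it ``visibly invariant''. Also, in (iii) it is worth one sentence confirming that $R$ meets each coset of $\mathbb{Z}_{2}\times\{1_{G}\}$ exactly once, so that the $k$-subset and transversal requirements in the definition of relative difference set are met with $k=4u^{2}$.
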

\begin{proof}
See \cite[Theorem 1]{Pot04} for (iii) 
$\Leftrightarrow$ (iv). The other equivalences
are given by Theorem~2.6 and Lemma~2.10 of 
\cite{DFH00}. \hfill $\Box$
\end{proof}
\begin{remark}
In Theorem~\ref{BaseCase} and Theorem~\ref{QOBaseCase} 
below we may assume that 
$\phi$ is normalized, by taking the complement of 
$D$ (and thus also of $R$) if necessary. 
\end{remark}

The next theorem is an analog of the previous one 
for $|G|\equiv 2 \mod 4$ (recall that we have not 
found quasi-orthogonal coboundaries over non-abelian 
$G$ at orders greater than $6$).
\begin{theorem}\label{QOBaseCase}
Let $G$ be abelian of order $4t+2$, and 
let $D$ be a $k$-subset of $G$  
with characteristic function 
$\chi:G\rightarrow \mathrm{GF}(2)$. Define 
$R=\{(\phi(g),g)\, |\, g\in G\}\subset 
{\mathbb{Z}}_2\times G$ where
$\phi(x)=(-1)^{\chi(x)}$.
Then the following are equivalent.
\begin{itemize}
\item[{\em (i)}] $\partial \phi$ is quasi-orthogonal.
\item[{\em (ii)}] $D$ is a 
$(4t+2,k,k-(t+1), (4t+1)(k-t)-k(k-1))$-almost 
difference set  in $G$.
\item[{\em (iii)}] 
$R$ is an extremal relative 
$(4t+2,2,4t+2,2t+1)$-quasi-difference 
set in ${\mathbb{Z}}_2\times G$ with forbidden 
subgroup $\mathbb{Z}_2\times \{1_G\}$. 
\item[{\em (iv)}] $\phi$ is an OBA. 
\end{itemize}
If a difference set with parameters
$(n,\frac{n\pm \sqrt{3n-2}}{2},
\frac{n+2\pm 2\sqrt{3n-2}}{4})$ does not exist, then 
{\em (i)} -- {\em (iv)} are further equivalent to
\begin{itemize}
\item[{\rm (v)}] 
$\chi$ has optimal  nonlinearity $(t+1)/(2t+1)$.
\end{itemize}
\end{theorem}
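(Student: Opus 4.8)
The plan is to establish $(\mathrm i)\Leftrightarrow(\mathrm{iv})\Leftrightarrow(\mathrm{ii})$ and $(\mathrm i)\Leftrightarrow(\mathrm{iii})$ directly from the tools of Section~\ref{Section2}, and then to deal with $(\mathrm v)$ separately; by the preceding remark we may take $\phi$ normalized, so that $N_\phi=D$ and $|N_\phi|=k$. For $(\mathrm i)\Leftrightarrow(\mathrm{iv})$ I would first observe that row $x$ of $M_{\partial\phi}$ sums to $\sum_{h\in G}\phi(x)\phi(h)\phi(x+h)=\phi(x)R_\phi(x)$, hence has absolute value $|R_\phi(x)|$. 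Lemma~\ref{corredif} gives $R_\phi(x)=(4t+2)-4\bigl(k-d_\phi(x)\bigr)$, which is $\equiv 2\pmod 4$, so $|R_\phi(x)|\ge 2$ for all $x$; summing over $x\neq 0$ yields $RE(M_{\partial\phi})\ge 8t+2$, with equality exactly when $\phi$ is an OBA. Since $\partial\phi\in B^2(G,\mathbb Z_2)$, Proposition~\ref{OptimalRowExcess}\,(ii) together with the definition of quasi-orthogonality gives that $\partial\phi$ is quasi-orthogonal if and only if $RE(M_{\partial\phi})=8t+2$; this proves $(\mathrm i)\Leftrightarrow(\mathrm{iv})$.

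For $(\mathrm{iv})\Leftrightarrow(\mathrm{ii})$: by Lemma~\ref{corredif}, for $x\neq 0$ we have $R_\phi(x)=\pm 2$ if and only if the difference function $d_\phi(x)=|D\cap(x+D)|$ lies in $\{k-t-1,k-t\}$, so $\phi$ is an OBA precisely when $d_\phi$ takes only these two values on $G\setminus\{0\}$, i.e.\ when $D$ is a $(4t+2,k,k-(t+1),\ell)$-almost difference set; and $\sum_{x\neq 0}d_\phi(x)=k^2-k$ forces $\ell=(4t+1)(k-t)-k(k-1)$. For $(\mathrm i)\Leftrightarrow(\mathrm{iii})$: since $\partial\phi(g,h)\phi(g+h)=\phi(g)\phi(h)$, the map $\Theta\colon E_{\partial\phi}\to\mathbb Z_2\times G$, $(u,g)\mapsto(u\phi(g),g)$, is a group isomorphism carrying $\{(1,g)\mid g\in G\}$ onto $R$ and $\langle(-1,1)\rangle$ onto $\mathbb Z_2\times\{1_G\}$. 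As $\partial\phi$ is a coboundary, Proposition~\ref{quasiort-quasidiff} states that $\partial\phi$ is quasi-orthogonal if and only if $\{(1,g)\mid g\in G\}$ is an \emph{extremal} relative $(4t+2,2,4t+2,2t+1)$-quasi-difference set in $E_{\partial\phi}$ relative to $\langle(-1,1)\rangle$, and transporting this statement along $\Theta$ gives $(\mathrm{iii})$.

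The equivalence with $(\mathrm v)$ is the crux, and I expect it to be the main obstacle. I would translate the nonlinearity of $\chi$ into the balance of its derivatives $D_a\chi(x)=\chi(x)+\chi(x+a)$: for $a\neq 0$, $|\{x:D_a\chi(x)=1\}|=|D\triangle(a+D)|=2\bigl(k-d_\phi(a)\bigr)$ is even, so — because $|G|/2=2t+1$ is odd — the nearest this quantity can come to balanced is the split $\{2t,2t+2\}$, which is attained for every $a\neq 0$ exactly when $d_\phi(a)\in\{k-t-1,k-t\}$ for all $a\neq 0$, i.e.\ when $(\mathrm i)$--$(\mathrm{iv})$ hold; the resulting optimal value of the nonlinearity works out to $(t+1)/(2t+1)$. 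It then remains to show that no other $\chi$ attains the optimum: the only competitors are characteristic functions of genuine difference sets in $G$, and a short computation (using $3n-2=4(3t+1)$) shows that their parameters would have to be $\bigl(n,\frac{n\pm\sqrt{3n-2}}{2},\frac{n+2\pm 2\sqrt{3n-2}}{4}\bigr)$; excluding such difference sets by hypothesis completes the equivalence. The delicate point is thus to pin down the precise notion and sharp bound for the nonlinearity of a $\mathrm{GF}(2)$-valued function on an arbitrary abelian group of order $4t+2$, and, at that bound, to separate the almost-difference-set solutions from the difference-set ones — everything else being a routine application of the machinery already developed.
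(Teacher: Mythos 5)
Your proposal is correct and follows essentially the same route as the paper: the same row-sum identity $\phi(g)R_\phi(g)$ for (i) $\Leftrightarrow$ (iv), the same isomorphism $(u,g)\mapsto(u\phi(g),g)$ combined with Proposition~\ref{quasiort-quasidiff} for (i) $\Leftrightarrow$ (iii), and Lemma~\ref{corredif} for the almost-difference-set translation. The only divergence is that where the paper outsources the value of the fourth parameter and the whole of (ii) $\Leftrightarrow$ (v) to \cite{CD04} (Identity (19) and Theorem~25 there), you unfold those citations into direct arguments; your counting of $\sum_{x\neq 0}d_\phi(x)=k^2-k$ and your sketch of the parity obstruction to balanced derivatives, with the excluded difference-set parameters arising from $k(k-1)=(k-t)(4t+1)$, is precisely the content of the cited results.
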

\begin{proof} 
Put $|G|=n$.

(i) $\Leftrightarrow$ (iv): 
Lemma~\ref{lemmaquasiortho} and the  fact that  
$\phi(g) R_\phi(g)$
is the sum of row $g$ in $M_{\partial \phi}$.

(i) $\Leftrightarrow$ (ii):
by Lemma~\ref{corredif}, $R_\phi(g)=2$ or $-2$ 
if and only if $d_{\phi}(g)=\allowbreak k-t-1$ 
or $k-t$, respectively.
Identity (19) of \cite{CD04} then accounts
for this part.

(i) $\Leftrightarrow$ (iii): 
Proposition \ref{quasiort-quasidiff} together
with the isomorphism $E_{\partial \phi} 
\rightarrow \mathbb{Z}_2\times G$ defined by 
$(u,g)\mapsto (u\phi(g),g)$;
cf.~Proposition~\ref{GammaRef}.

(ii) $\Leftrightarrow$ (v):
see \cite[Theorem~25]{CD04}. 
\hfill $\Box$
\end{proof}
\begin{remark}
The condition attached to (v) is only needed 
for (v) $\Rightarrow$ (ii). No difference 
sets with the stated parameters are known; 
see \cite[Remark~II, p.~224]{CD04}.
\end{remark}

We end this section with a discussion of
calculating GOBAs.
Label the elements of $G$ as  
$g_1=0,g_2,\ldots,g_{4t+2}$, and let 
$\delta_k\colon G\rightarrow \{\pm 1\}$
be the characteristic function of $\{ g_k\}$.
Up to relabeling, 
$\{\partial_2,\ldots,\partial_{4t+1}\}$ 
is a basis of $B^2(G,\langle -1\rangle)$, 
where $\partial_k:=\partial \delta_k$ is an
{\em elementary coboundary}.
Choose ${\bf z} \neq {\bf 0}$. We first  
try to find quasi-orthogonal 
$\psi\in Z^2(G,\langle -1\rangle)$
such that $f_{\bf z}\psi\in 
B^2(G, \langle -1\rangle)$.
Straightforward linear algebra  
gives the decomposition $\psi=   
f_{\bf z} \prod_k \partial_k^{i_k}$. 
Then $\phi = \prod_k \delta_k^{i_k}$ is 
a GOBA({\bf s}) of type ${\bf z}$ 
over $G$.
\begin{example}\label{ExampleOne}
The maps  
$\phi_1={\small 
\left[\begin{array}{rrr}
1 & -1 & \phantom{-} 1 \\
1 &  1 & \phantom{-} 1
\end{array}
\right]}$,
$\phi_2={\small 
\left[\begin{array}{rrr}
1 & \phantom{-} 1 &  -1\\
1 & \phantom{-} 1 &  1
\end{array}
\right]}$,
$\phi_3={\small 
\left[\begin{array}{rrr}
1 & 1 & -1 \\
1 &  -1 & \phantom{-} 1
\end{array}
\right]}$  
on ${\mathbb Z}_6={\mathbb Z}_2\times 
{\mathbb Z}_3$ are GOBA($2,3$)s of 
type ${\bf z}_1 = (1,0)$, 
${\bf z}_2 = (0,1)$,
${\bf z}_3 = (1,1)$, respectively. 
We display each quasi-orthogonal cocycle 
$f_{{\bf z}_i}\partial \phi_i$ 
as a Hadamard (componentwise) product:
\[
{\footnotesize \left[ \vspace{1pt}
\renewcommand{\arraycolsep}{.08cm}
\begin{array}{lrrrrr}
1 & \phantom{-}1 & \phantom{-}1 & 1 & 1 & 1  \\
1 & \phantom{-}1 & \phantom{-}1 & 1 & 1 & 1  \\
1 & \phantom{-}1 & \phantom{-}1 & 1 & 1 & 1  \\
1 & \phantom{-}1 & \phantom{-}1 & -1 & -1 & -1  \\
1 & \phantom{-}1 & \phantom{-}1 & -1 & -1 & -1  \\
1 & \phantom{-}1 & \phantom{-}1 & -1 & -1 & -1  
\end{array}
\vspace{1pt} \right] } \circ
{\footnotesize\left[ \vspace{1pt}
\renewcommand{\arraycolsep}{.08cm}
\begin{array}{rrrrrr}
1 & 1 & 1 & 1 & 1 & 1  \\
1 & 1 & -1 & -1 & -1 & -1  \\
1 & -1 & -1 & 1 & 1 & 1  \\
1 & -1 & 1 & 1 & -1 & 1  \\
1 & -1 & 1 & -1 & 1 & 1  \\
1 & -1 & 1 & 1 & 1 & -1  
\end{array}
\vspace{1pt} \right] } 
= {\footnotesize \left[ \vspace{1pt}
\renewcommand{\arraycolsep}{.08cm}
\begin{array}{rrrrrr}
1 & 1 & 1 & 1 & 1 & 1  \\
1 & 1 & -1 & -1 & -1 & -1  \\
1 & -1 & -1 & 1 & 1 & 1  \\
1 & -1 & 1 & -1 & 1 & -1  \\
1 & -1 & 1 & 1 & -1 & -1  \\
1 & -1 & 1 & -1 & -1 & 1  
\end{array}
\vspace{1pt} \right] }, 
\] 
\[
{\footnotesize \left[ \vspace{1pt}
\renewcommand{\arraycolsep}{.08cm}
\begin{array}{lrrrrr}
1 & 1 & 1 & \phantom{-}1 & 1 & 1   \\
1 & 1 & -1  & \phantom{-}1 & 1 & -1   \\
1 & -1 & -1 & \phantom{-}1 & -1 & -1   \\
1 & 1 & 1 & \phantom{-}1 & 1 & 1   \\
1 & 1 & -1 &\phantom{-}1 & 1 & -1   \\
1 & -1 & -1 & \phantom{-}1 & -1 & -1   
\end{array}
\vspace{1pt} \right] } \circ
{\footnotesize\left[ \vspace{1pt}
\renewcommand{\arraycolsep}{.08cm}
\begin{array}{rrrrrr}
1 & 1 & 1 & 1 & 1 & 1  \\
1 & -1 & -1 & 1 & 1 & 1  \\
1 & -1 & 1 & -1 & -1 & -1  \\
1 & 1 & -1 & 1 & 1 & -1  \\
1 & 1 & -1 & 1 & -1 & 1  \\
1 & 1 & -1 & -1 & 1 & 1  
\end{array}
\vspace{1pt} \right] } 
= {\footnotesize \left[ \vspace{1pt}
\renewcommand{\arraycolsep}{.08cm}
\begin{array}{rrrrrr}
1 & 1 & 1 & 1 & 1 & 1  \\
1 & -1 & 1 & 1 & 1 & -1  \\
1 & 1 & -1 & -1 & 1 & 1  \\
1 & 1 & -1 & 1 & 1 & -1  \\
1 & 1 & 1 & 1 & -1 & -1  \\
1 & -1 & 1 & -1 & -1 & -1  
\end{array}
\vspace{1pt} \right] }, 
\] 
\[
{\footnotesize \left[ \vspace{1pt}
\renewcommand{\arraycolsep}{.08cm}
\begin{array}{lrrrrr}
1 & 1 & 1 & \phantom{-}1 & 1 & 1   \\
1 & 1 & -1  & \phantom{-}1 & 1 & -1   \\
1 & -1 & -1 & \phantom{-}1 & -1 & -1   \\
1 & 1 & 1 & -1 & -1 & -1   \\
1 & 1 & -1 &-1 & -1 & 1   \\
1 & -1 & -1 & -1 & 1 & 1  
\end{array}
\vspace{1pt} \right] } \circ
{\footnotesize\left[ \vspace{1pt}
\renewcommand{\arraycolsep}{.08cm}
\begin{array}{rrrrrr}
1 & 1 & 1 & 1 & 1 & 1  \\
1 & -1 & -1 & -1 & -1 & 1  \\
1 & -1 & 1 & -1 & 1 & 1  \\
1 & -1 & -1 & 1 & -1 & -1  \\
1 & -1 & 1 & -1 & -1 & -1  \\
1 & 1 & 1 & -1 & -1 & 1  
\end{array}
\vspace{1pt} \right] } 
= {\footnotesize \left[ \vspace{1pt}
\renewcommand{\arraycolsep}{.08cm}
\begin{array}{rrrrrr}
1 & 1 & 1 & 1 & 1 & 1  \\
1 & -1 & 1 & -1 & -1 & -1  \\
1 & 1 & -1 & -1 & -1 & -1  \\
1 & -1 & -1 & -1 & 1 & 1  \\
1 & -1 & -1 & 1 & 1 & -1  \\
1 & -1 & -1 & 1 & -1 & 1  
\end{array}
\vspace{1pt} \right] }. 
\] 
Note that $f_{{\bf z}_2}\partial\phi_2$ is 
a quasi-orthogonal coboundary; as are all
the $\partial \phi_i$.
\end{example}
\begin{example}\label{ExampleOnetwoD}
The map 
${\small 
\left[\begin{array}{rrrrrrrrr}
1 & \ -1& 1 & \ -1 & \ -1 & \phantom{-}1 \\
1 & 1 & \ -1 & 1 & 1 & 1 \\
1 & 1 & 1 & 1 & 1 & 1
\end{array}
\right]}^\top$ 
on ${\mathbb Z}_6 \times {\mathbb Z}_3 = 
{\mathbb Z}_2 \times {\mathbb Z}_3
\times{\mathbb Z}_3$ 
is a GOBA($6,3$) of type ${\bf z} =(1,0)$. 
Its quasi-orthogonal cocycle is 
$f_{\bf z}\partial_4\partial_8
\partial_{10}\partial_{13}$. 
\end{example}

\section{Negaperiodic Golay pairs}
\label{Section4}

In this section we explore how GOBSs can be used 
to construct NGPs.
\begin{proposition}[{\cite[Theorem~3]{Ega16}}] 
\label{equivNGP-RDS}
Binary sequences $\phi_1$, $\phi_2$ of length 
$2t$ form an NGP if and only if  
$\{x^i \mid \phi_1'(i)=1\}\cup
\{x^iy \mid  \phi_2'(i)=1\}$
is a relative $(4t,2,4t,2t)$-difference set
in the dicyclic group
$Q_{8t}=\langle x,y \,|\, 
x^{2t}=y^2,\allowbreak y^4=1,\,
y^{-1}xy=x^{-1}\rangle$.
\end{proposition}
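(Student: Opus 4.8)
The plan is to show that the proposed set
$R:=\{x^i\mid\phi_1'(i)=1\}\cup\{x^iy\mid\phi_2'(i)=1\}$
is a relative $(4t,2,4t,2t)$-difference set in $Q_{8t}$ with forbidden subgroup $\langle z\rangle$, where $z:=x^{2t}=y^2$ generates the unique subgroup of order $2$ (note $(x^iy)^2=y^2\ne1$, so every involution lies in $\langle x\rangle$), by translating the difference-set condition into the autocorrelation identities that define an NGP. First I would record the arithmetic of $Q_{8t}$ used throughout: every element is uniquely $x^i$ or $x^iy$ with $i\in\mathbb Z_{4t}$; $z$ is central; $yx^i=x^{-i}y$; $y^2=x^{2t}$; and the cosets of $\langle z\rangle$ are the pairs $\{x^i,x^{i+2t}\}$ and $\{x^iy,x^{i+2t}y\}$, $0\le i<2t$. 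Since $\phi_j'(i+2t)=-\phi_j'(i)$, each half of $R$ contains exactly one member of each relevant coset, so $R$ is a full transversal for $\langle z\rangle$, $|R|=4t$, and $|R\cap zR|=0$ automatically. Hence it remains only to check that $|R\cap gR|=2t$ for every $g\in Q_{8t}\setminus\langle z\rangle$, i.e.\ for $g=x^w$ with $w\ne0,2t$ and for $g=x^wy$ with $w\in\mathbb Z_{4t}$.

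Next I would compute $gR$ in each case. For $g=x^wy$, from $x^wyx^i=x^{w-i}y$ and $x^wyx^iy=x^{w-i}y^2=x^{w-i+2t}$ one gets, after reindexing and applying $\phi_2'(\,\cdot\,+2t)=-\phi_2'(\,\cdot\,)$, that $gR=\{x^j\mid\phi_2'(w-j)=-1\}\cup\{x^jy\mid\phi_1'(w-j)=1\}$. Intersecting with $R$ and substituting $j\mapsto w-j$ in the $y$-part, the count telescopes to
$|\{j:\phi_1'(j)=1,\ \phi_2'(w-j)=-1\}|+|\{j:\phi_1'(j)=1,\ \phi_2'(w-j)=1\}|=|\{j:\phi_1'(j)=1\}|=2t$,
independently of $w$; so the $x^wy$-cosets impose no restriction. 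For $g=x^w$ ($w\ne0,2t$) the analogous computation gives $|R\cap x^wR|=P_1+P_2$ with $P_j:=|\{i\in\mathbb Z_{4t}:\phi_j'(i)=\phi_j'(i-w)=1\}|$.

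The last step is an elementary lemma: for $\psi\in\{\pm1\}^{4t}$ with $\psi(i+2t)=-\psi(i)$ and $w\ne0,2t$ modulo $4t$, we have $|\{i:\psi(i)=\psi(i-w)=1\}|=t+\frac14R_\psi(w)$. One proves this by classifying $i\in\mathbb Z_{4t}$ by the sign pair $(\psi(i),\psi(i-w))$: counting $+1$'s in each coordinate makes the two mixed classes equinumerous, the shift $i\mapsto i+2t$ pairs the $(+,+)$ class bijectively with the $(-,-)$ class, and then $R_\psi(w)$ together with the total $4t$ pins down the $(+,+)$ count (in particular $R_\psi(w)\equiv0\bmod4$, as expected). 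Feeding this into the previous paragraph yields $|R\cap x^wR|=2t+\frac14\bigl(R_{\phi_1'}(w)+R_{\phi_2'}(w)\bigr)$, which equals $2t$ exactly when $R_{\phi_1'}(w)+R_{\phi_2'}(w)=0$. Demanding this for all $w\in\{1,\dots,4t-1\}\setminus\{2t\}$ is, by the remarks of Section~\ref{Introduction}, precisely the definition of $\phi_1,\phi_2$ being an NGP, so the equivalence follows.

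I expect the main obstacle to be purely bookkeeping: keeping the substitution $yx^i=x^{-i}y$ and the collapse $y^2=x^{2t}=z$ consistent with the negaperiodic identity $\phi_j'(i+2t)=-\phi_j'(i)$ through the two coset computations, and in particular explaining cleanly why the $x^wy$-cosets always give the trivial count $2t$ while the $x^w$-cosets carry the genuine autocorrelation data. Once the multiplication and coset structure of $Q_{8t}$ are set up carefully, the remaining work is the routine correlation lemma above.
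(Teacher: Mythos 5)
Your proof is correct. Note that the paper itself gives no argument for this proposition: it is imported verbatim as Theorem~3 of Egan's paper \cite{Ega16}, so there is no in-paper proof to compare against. Your direct verification is sound and self-contained: the identification of $\langle x^{2t}\rangle=\langle y^2\rangle$ as the forbidden subgroup, the observation that $\phi_j'(i+2t)=-\phi_j'(i)$ makes $R$ a full transversal, the computation showing $|R\cap x^wyR|=2t$ unconditionally (the two half-counts summing over the dichotomy $\phi_2'(w-j)=\pm1$), and the reduction of $|R\cap x^wR|$ to $2t+\tfrac14\bigl(R_{\phi_1'}(w)+R_{\phi_2'}(w)\bigr)$ via the counting lemma $n_{++}=t+\tfrac14R_\psi(w)$ all check out, and the final quantifier range $w\in\{1,\dots,4t-1\}\setminus\{2t\}$ matches the equivalent form of the NGP condition recorded in Section~\ref{Introduction}. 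The only cosmetic remark is that the clause $|R\cap zR|=0$ is not part of the relative difference set condition (which quantifies only over $x\in E\setminus N$), so that observation, while true for any transversal, is not needed.
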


\begin{remark} \label{Hadgroups} 
By Proposition~\ref{equivNGP-RDS} and
\cite[Theorems~5.6 and 5.7]{ACP01}, 
NGPs of length $(q+1)/2$ exist for all prime 
powers $q\equiv  3 \ \mod 4$. 
\end{remark}

Proposition~\ref{equivNGP-RDS} ties NGPs
into the mainstream theory of cocyclic
Hadamard matrices: 
by \cite[Proposition~6.5]{Fla97},
existence of a $(4t,2,4t,2t)$-difference set
in $Q_{8t}$ is equivalent to existence of certain 
orthogonal cocycles over the dihedral group 
$D_{4t}$ of order $4t$. (Incidentally, this 
gives another justification of Remark~\ref{Hadgroups}, 
via Ito's Hadamard groups of 
quadratic residue type~\cite[pp.~986--987]{Ito94}.)
These cocycles lie in a single cohomology class,
with representative labeled $(A,B,K)=(1,-1,-1)$
in \cite{Fla97}; $A$, $B$ are `inflation' variables 
and $K$ is the `transgression' variable in a 
Universal Coefficients theorem decomposition of 
$H^2(D_{4t},\mathbb{Z}_2)$.

The next theorem makes Proposition~\ref{equivNGP-RDS}
more explicit. It shows how to translate directly 
between cocycles and NGPs.
When the latter are complementary GOBSs, this 
implies existence of orthogonal cocycles if
there exist quasi-orthogonal cocycles at half 
the order (unfortunately, the process does 
not reverse). 
\begin{theorem}\label{ExplicitProp5}
Let $G=\langle a, b \, |\,  a^{n}=b^2=1, a^b 
= a^{-1}\rangle\cong D_{2n}$
with elements ordered as
$1, a, \ldots , a^{n-1}, b, \allowbreak 
ab, \ldots , a^{n-1}b$.
Also let $\phi_1$, $\phi_2$ be binary sequences
of length $n$, and define $j_{k,i}$ to be $1$ or $0$ 
depending on whether
$\phi_i(k) = -1$ or $1$, respectively. 
Then $(\phi_1,\phi_2)$ is an NGP
if and only if
$\lambda \prod_{k=1}^n
\partial_{k}^{j_{k,1}}\partial_{n+k}^{j_{k,2}}$
is an orthogonal cocycle over $G$, where 
$\lambda$ is the cohomology class representative 
labeled $(A,B,K)= (1,-1,-1)$ in 
{\em \cite[Section~6]{Fla97}}.
\end{theorem}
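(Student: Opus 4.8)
The plan is to chain together Egan's equivalence (Proposition~\ref{equivNGP-RDS}) with the dictionary between relative $(4t,2,4t,2t)$-difference sets and orthogonal cocycles, tracking transversals carefully so that the coboundary factor gets pinned down in terms of $\phi_1,\phi_2$. Throughout write $n=2t$, so $|G|=4t$ and $|Q_{8t}|=2|G|$, and put $Z=\langle y^2\rangle=\langle x^{2t}\rangle\cong\mathbb{Z}_2$, the central forbidden subgroup of $Q_{8t}$, with $Q_{8t}/Z\cong G$ via $xZ\mapsto a$, $yZ\mapsto b$. Let $\phi\colon G\to\{\pm1\}$ be the map carrying $\phi_1$ onto $\langle a\rangle$ and $\phi_2$ onto the coset $\langle a\rangle b$ (so $\phi(a^k)=\phi_1(k)$ and $\phi(a^kb)=\phi_2(k)$). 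By the definition of the $j_{k,i}$ and the stated ordering of $G$ we have $\phi=\prod_k\delta_k^{j_{k,1}}\delta_{n+k}^{j_{k,2}}$, and since $\partial$ is a homomorphism and $\partial_k=\partial\delta_k$, this yields $\partial\phi=\prod_k\partial_k^{j_{k,1}}\partial_{n+k}^{j_{k,2}}$. Hence it is enough to show that $(\phi_1,\phi_2)$ is an NGP if and only if $\lambda\partial\phi$ is orthogonal over $G$.

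First I would observe that the set $R=\{x^i\mid\phi_1'(i)=1\}\cup\{x^iy\mid\phi_2'(i)=1\}$ of Proposition~\ref{equivNGP-RDS} is a transversal for $Z$ in $Q_{8t}$: since $\phi_1'$ is the concatenation of $\phi_1$ and $-\phi_1$, for each $i\in\mathbb{Z}_{4t}$ exactly one of $\phi_1'(i)$ and $\phi_1'(i+2t)=-\phi_1'(i)$ equals $1$, so exactly one of the two $Z$-translates $x^i$, $x^{i+2t}=x^iy^2$ of a given element of $\langle x\rangle/Z$ lies in $R$, and likewise on $\langle x\rangle y$. Reading this off, the representative of $a^kZ$ that lies in $R$ is the ``standard lift'' $x^k$ when $\phi_1(k)=1$ and its $Z$-translate $x^ky^2$ when $\phi_1(k)=-1$, and similarly $x^ky$ resp.\ $x^ky^3$ on the other coset; i.e.\ $R$ is the ``$\phi$-twisted'' transversal.

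Next I would identify $Q_{8t}$ with the canonical central extension $E_\lambda$ of $\mathbb{Z}_2$ by $G$. The purpose of taking $\lambda$ to be the particular cocycle with coordinates $(A,B,K)=(1,-1,-1)$ in the decomposition of $H^2(G,\mathbb{Z}_2)$ of \cite[Section~6]{Fla97} is exactly that its canonical section $g\mapsto(1,g)$ corresponds, under an isomorphism $\Psi\colon E_\lambda\to Q_{8t}$ sending $\langle(-1,1)\rangle$ onto $Z$ and inducing the identity on $G$, to the standard section $a^k\mapsto x^k$, $a^kb\mapsto x^ky$ ($0\le k<n$) of $Q_{8t}$ used implicitly by Egan; thus $\Psi(1,g)$ is the standard lift of $g$ and $\Psi(-1,g)$ its $Z$-translate. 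By the previous paragraph, $\Psi^{-1}(R)=\widetilde R:=\{(\phi(g),g)\mid g\in G\}\subseteq E_\lambda$. Finally, the map $\Theta\colon E_{\lambda\partial\phi}\to E_\lambda$, $(u,g)\mapsto(u\phi(g),g)$, is an isomorphism --- a routine check using $\phi(g)^2=1$, being the standard isomorphism between extensions that differ by a coboundary, cf.\ Proposition~\ref{GammaRef} --- it fixes $\langle(-1,1)\rangle$ pointwise, and it carries the canonical transversal $\{(1,g)\mid g\in G\}$ of $E_{\lambda\partial\phi}$ onto $\widetilde R$. Chaining the equivalences now gives the claim: by Proposition~\ref{equivNGP-RDS}, $(\phi_1,\phi_2)$ is an NGP precisely when $R$ is a relative $(4t,2,4t,2t)$-difference set in $Q_{8t}$ relative to $Z$; this holds precisely when $\widetilde R$ is such a set in $E_\lambda$ relative to $\langle(-1,1)\rangle$; equivalently, when $\{(1,g)\mid g\in G\}$ is such a set in $E_{\lambda\partial\phi}$ relative to $\langle(-1,1)\rangle$; and by the orthogonal analog of Proposition~\ref{quasiort-quasidiff} --- that is, \cite[Theorem~2.4]{DFH00} combined with the criterion that $\psi$ is orthogonal exactly when $RE(M_\psi)=0$ --- this last condition says precisely that $\lambda\partial\phi$ is an orthogonal cocycle over $G$. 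Since $\lambda\partial\phi=\lambda\prod_k\partial_k^{j_{k,1}}\partial_{n+k}^{j_{k,2}}$, we are done.

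The main obstacle is the identification step in the third paragraph: one must check that $\lambda$, specified only by its $(A,B,K)$-coordinates, equals --- for a suitable section, not merely up to cohomology --- the cocycle read off from $1\to Z\to Q_{8t}\to G\to1$ with Egan's section, so that the two canonical transversals match and no spurious coboundary is introduced (such an error would replace $\partial\phi$ by $\partial\phi$ times a fixed coboundary, altering the statement). This is a finite verification using the explicit dihedral $2$-cocycle formulas of \cite[Section~6]{Fla97}; concretely, one checks that in $E_\lambda$ the element $(1,a)$ satisfies $(1,a)^n=(-1,1)$ (matching $x^n=x^{2t}=y^2$), that $(1,b)^2=(-1,1)$ (matching $y^2$), and that the remaining cocycle values reproduce the relation $y^{-1}xy=x^{-1}$. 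Should that bookkeeping prove unwieldy, an alternative route bypasses $Q_{8t}$: using that $\lambda\partial\phi$ is orthogonal iff every row of $M_{\lambda\partial\phi}$ other than row $1_G$ has zero sum, together with the block form of $\lambda$ on $G$ (a $\gamma_n$-type cocycle on the $\langle a\rangle$-block, with the reflection blocks encoding $b^2\mapsto y^2$ and the transgression $a^n\mapsto y^2$), one expands the row sum indexed by $a^wb^\varepsilon$ and recognizes it, up to sign, as $R_{\phi_1'}(w)+R_{\phi_2'}(w)$ (for $\varepsilon=0$, with the analogous cross term for $\varepsilon=1$) at the relevant shift, so that the vanishing of all these sums is exactly the NGP condition of \cite{Ega16}.
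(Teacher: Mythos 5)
Your argument is correct and follows essentially the same route as the paper's proof: Egan's equivalence plus the difference-set/orthogonal-cocycle correspondence of \cite[Corollary~2.5]{DFH00}, with the set $R$ recognized as the $\phi$-twisted transversal of $\langle x^n\rangle$ in $Q_{4n}$, so that the cocycle it defines is $\lambda\partial\phi$. The one step you defer --- checking that the cocycle of the \emph{standard} section of $Q_{4n}\rightarrow D_{2n}$ is exactly the $(A,B,K)=(1,-1,-1)$ representative $\lambda$, not merely cohomologous to it --- is handled in the paper by writing out $M_{\lambda}$ explicitly in block form (a back negacyclic block $A$ and its row-reversal $B$), which is the same finite verification you describe.
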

\begin{proof} 
The center of
$\langle x,y \,|\, x^{n}=y^2,\allowbreak
y^4=1,\, y^{-1}xy=x^{-1}\rangle \cong Q_{4n}$ 
is $\langle x^n\rangle$.
Since $G\cong Q_{4n}/\langle x^n\rangle$, we may 
define a transversal map
$\sigma: G \rightarrow Q_{4n}$ by
\[
a^i \mapsto x^{i+n\delta_{\scst \phi_1(i),-1}}, 
\qquad
a^ib  \mapsto x^{i+n\delta_{\scst \phi_2(i),-1}}y
\]
where $\delta$ is the Kronecker delta.
Assuming that $\phi_1$ and $\phi_2$ are normalized,
let $\psi$ be the cocycle for $\sigma$, i.e.,
$\psi(g,h) = \sigma(g)\sigma(h)\sigma(gh)^{-1}$. 
By Proposition~\ref{equivNGP-RDS} and 
\cite[Corollary~2.5]{DFH00}, $\psi$ is orthogonal
if and only if $(\phi_1,\phi_2)$ is an NGP.

Set $\varphi(a^i) = \phi_1(i)$ and 
$\varphi(a^ib) = \phi_2(i)$.
Then $\lambda = \psi\hspace{.5pt}
\partial\varphi$ has matrix
\[\small 
\left[
\renewcommand{\arraycolsep}{.075cm}
\begin{array}{cr}
A & A\\
B & -B
\end{array}\right]
\]
where 
$A= 
[(-1)^{\lfloor (i+j)/n
\rfloor}]_{0\leq i,j\leq n-1}$ 
is back negacyclic, and $B$ is $A$ with rows 
$r$ and $n-r+1$ swapped for $1\leq r \leq n$.
Furthermore, 
$\partial \varphi = \prod_{k=1}^n
\partial_{k}^{j_{k,1}}\partial_{n+k}^{j_{k,2}}$
under the stipulated ordering of $G$.
\hfill $\Box$
\end{proof}
 
We now undertake a case study of quasi-orthogonal 
cocycles over cyclic groups.
Let $G=\mathbb{Z}_{4t+2}$ and index matrices by $1,\ldots,4t+2$ in this order. 
The set ${\cal B}=\{\gamma, \partial_i\, |\, \allowbreak 
2\leq i \leq 4t+2\}$ where $\gamma =\gamma_{4t+2}$
(as defined before Proposition~\ref{Proposition3})
is a basis of $Z^2(G,\mathbb{Z}_2)$.
We get an elementary coboundary matrix 
$M_i:= M_{\partial_i}$ by normalizing the back 
circulant matrix whose first row is $1$s 
except for the $i$th entry.
Also, $M_\gamma$ is the back 
negacyclic matrix $N$ of order $4t+2$. 
\begin{lemma}\label{propositiontquenacy}
Let $\psi \in Z^2(G,\mathbb{Z}_2)
\setminus B^2(G,\mathbb{Z}_2)$, say 
$M_\psi=M_{i_1} \circ \cdots 
\circ M_{i_w} \circ N$. Then
\begin{itemize}
\item[{\rm (i)}] 
up to sign, $M_\psi$ has $i$th row sum equal 
to its $(4t+4-i)$th row sum.
\item[{\rm (ii)}] The $(2t+2)$th  row sum of 
$M_\psi$ is $0$.
\item[{\rm (iii)}] $\psi$ is quasi-orthogonal 
if and only if the $i$th row sum of $M_\psi$ is 
$0$ for even $i$ and $\pm 2$ for odd $i>1$.
\end{itemize}
\end{lemma}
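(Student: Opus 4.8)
The plan is to express every row sum of $M_\psi$ through the periodic autocorrelation of an associated expanded sequence, and to read off (i)--(iii) from that. Since $H^2(G,\mathbb{Z}_2)\cong\mathbb{Z}_2$ and $\gamma=\gamma_{4t+2}=f_{(1)}\notin B^2(G,\mathbb{Z}_2)$ by Proposition~\ref{Proposition3}(ii) (as $4t+2$ is even), the hypothesis $\psi\notin B^2(G,\mathbb{Z}_2)$ says precisely that $\psi$ is cohomologous to $\gamma$; and from $M_\psi=M_{i_1}\circ\cdots\circ M_{i_w}\circ N$ we obtain $\psi=\gamma\,\partial\phi$ with $\phi=\prod_k\delta_{i_k}$, where (taking the $i_k$ distinct, which is harmless since $M_i\circ M_i$ is the all-$1$s matrix) $\phi$ is the $\{\pm1\}$-valued characteristic function of $\{g_{i_1},\dots,g_{i_w}\}\subseteq G\setminus\{0\}$ --- a normalized binary sequence of length $4t+2$. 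Writing $\phi'$ for the expansion of $\phi$ with respect to ${\bf z}=(1)$ (the concatenation of $\phi$ and $-\phi$, a sequence of length $8t+4$), and setting $E=\mathbb{Z}_{8t+4}$, $H=\{0,4t+2\}$, $K=\{0\}$, I claim that for $1\le i\le 4t+2$ the $i$-th row sum of $M_\psi$ equals
\[
\frac12\,\phi(i-1)\,R_{\phi'}(i-1),
\]
the argument of $\phi'$ being read modulo $8t+4$.

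For the claim, note $M_\psi[g,h]=\gamma(g,h)\phi(g)\phi(h)\phi(g+h)$, so the $g$-row sum is $\phi(g)\sum_{h\in G}\gamma(g,h)\phi(h)\phi(g+h)$. From the definitions of $\gamma$ and of the expansion, $\gamma(g,h)\,\phi\big((g+h)\bmod(4t+2)\big)=\phi'(g+h)$ for all $g,h\in\{0,\dots,4t+1\}$ (the negacyclic sign of $\gamma$ absorbs the sign of $\phi'$ on the upper half of its range), so $\sum_{h\in G}\gamma(g,h)\phi(h)\phi(g+h)=\sum_{x=0}^{4t+1}\phi'(x)\phi'(x+g)$; and this last sum is $\frac12 R_{\phi'}(g)$ by Corollary~\ref{RphiNonZeroSplit}, applied with the transversal $\{0,\dots,4t+1\}$ of $H$ in $E$ (equivalently, the sum is the negaperiodic autocorrelation of $\phi$ at shift $g$; cf.~\cite[Lemma~2]{Ega16}). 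I would then record two symmetries of $R_{\phi'}$ on $\mathbb{Z}_{8t+4}$: it is even, $R_{\phi'}(-x)=R_{\phi'}(x)$, and $R_{\phi'}(x+(4t+2))=-R_{\phi'}(x)$ because $4t+2\in H\setminus K$ (Lemma~\ref{PhidashFacts}).

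Parts (i) and (ii) follow at once. For (i), when $2\le i\le 4t+2$ the displayed formula gives that the $(4t+4-i)$-th row sum is $\frac12\phi(4t+3-i)\,R_{\phi'}(4t+3-i)$; writing $4t+3-i=(4t+2)-(i-1)$, the two symmetries give $R_{\phi'}(4t+3-i)=-R_{\phi'}(-(i-1))=-R_{\phi'}(i-1)$, so this differs from the $i$-th row sum only by a sign. For (ii), with $x=2t+1=(4t+2)/2$ the two symmetries force $R_{\phi'}(x)=R_{\phi'}(x+(4t+2))=-R_{\phi'}(x)$, so $R_{\phi'}(2t+1)=0$ and the $(2t+2)$-th row sum vanishes.

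For (iii) I would apply Theorem~\ref{qc-goba} with ${\bf s}=(4t+2)$ and ${\bf z}=(1)$, so $f_{\bf z}=\gamma$: the cocycle $\psi=\gamma\,\partial\phi$ is quasi-orthogonal if and only if $\phi$ is a GOBA$(4t+2)$ of type $(1)$, that is, a GOBS of length $4t+2$. As $(4t+2)/2=2t+1$ is odd, the characterization of GOBSs in Section~\ref{Introduction} says this holds precisely when $|R_{\phi'}(w)|=0$ for odd $w$ and $|R_{\phi'}(w)|=4$ for even $w$, $0<w<4t+2$. Letting $w=i-1$ run over $\{1,\dots,4t+1\}$ --- so that even $i$ corresponds to odd $w$, and odd $i>1$ to even $w$ --- the displayed formula converts this into the assertion that the $i$-th row sum of $M_\psi$ is $0$ for even $i$ and $\pm2$ for odd $i>1$. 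Conversely, if the row sums follow that pattern then $|X_0|=2t+1$ and $|X_2\cup X_{-2}|=2t$ (the $1_G$-row has sum $4t+2$), so $\psi$ is quasi-orthogonal by Lemma~\ref{lemmaquasiortho}. The one genuinely delicate step is the displayed identity, which forces us to track the labelling $1,\dots,4t+2$ of $G$, the ambient group $\mathbb{Z}_{8t+4}$, and the expansion $\phi'$ simultaneously, so that the negacyclic wrap-around of $\gamma$ exactly matches the sign carried by $\phi'$ on its upper half; the remaining steps are short.
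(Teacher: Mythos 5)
Your proof is correct, but it takes a genuinely different route from the paper's. The paper argues entirely at the level of the basis matrices: part (i) comes from the observation that for $N$ and for each $M_{i_j}$, row $4t+4-i$ is, up to negation, row $i$ cycled; part (ii) from the explicit form of row $2t+2$ of $N$ and of each $M_i$; and part (iii) from a parity count --- every row of each $M_i$ contains an even number of $-1$s while row $i$ of $N$ contains $i-1$ of them, so the $i$th row sum of $M_\psi$ is congruent to $0$ or $2$ mod $4$ according as $i$ is even or odd, and Lemma~\ref{lemmaquasiortho} then forces the stated values. You instead pass through the Section~\ref{Section3} machinery via the identity (row sum of row $i$) $=\tfrac12\phi(i-1)R_{\phi'}(i-1)$, which I have checked and which is worth recording in its own right: it is the precise link between row excess and negaperiodic autocorrelation that underlies Theorem~\ref{ExplicitProp5}. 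Given that identity, the symmetries $R_{\phi'}(-x)=R_{\phi'}(x)$ and $R_{\phi'}(x+4t+2)=-R_{\phi'}(x)$ deliver (i) and (ii) cleanly, and your two directions for (iii) (Theorem~\ref{qc-goba} one way, Lemma~\ref{lemmaquasiortho} the other) are sound. The one caveat is that your forward implication in (iii) leans on the ``Equivalently'' characterization of GOBSs asserted without proof in the Introduction, which amounts to the congruence $R_{\phi'}(w)\equiv 0$ or $4 \pmod 8$ according to the parity of $w$; that is exactly the parity information the paper's count of $-1$s supplies directly, so the paper's proof is self-contained where yours defers to an unproved (though true and easily verified) assertion. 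In exchange, your argument is more conceptual, explains why the row-sum pattern alternates with $i$, and adapts to higher-dimensional $G$ where the explicit matrix description of the basis is less convenient.
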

\begin{proof} 
If $\psi \in \mathcal B$ then row $i>2t+2$ of 
$M$ or its negation is 
row $(4t+4-i)$ cycled $4t+4-i-1$ positions to 
the right. Part~(i) then follows.
For (ii), observe that row $2t+2$ in $N$ is 
$[1\stackrel{2t+1}{\cdots}1$ 
 $-1\stackrel{2t+1}{\cdots}-1]$,
whereas the first half of row $2t+2$ in $M_i$ is 
identical to the second half.
Finally, (iii) holds
because the number of $-1$s in any row of $M_i$ 
is even; and the rows of $N$ indexed by an even 
(respectively, odd) integer have an odd 
(respectively, even) number of $-1$s.
\hfill $\Box$
\end{proof}

We use an approach borrowed from \cite{AAFR08} 
to count the negative entries in a $G$-cocyclic
matrix. Negating row $i$ of $M_{i}$ 
gives a {\em generalized coboundary matrix} 
$\overline{M}_{i}$, with exactly two $-1$s 
in each non-initial row $r$: these are in
columns $i$ and $[i-r+1]_{4t+2}$, 
where $[m]_n\in \{ 1, \ldots , n\}$ denotes 
the residue of $m$ modulo $n$.  
(Although $\overline{M}_i$ is not cocyclic, row 
negation preserves row excess.)
Hence the two generalized coboundary 
matrices with $-1$ in position $(r,c)$
are $\overline{M}_c$ and 
$\overline{M}_{[r+c-1]_{4t+2}}$.

A set $\{\overline{M}_{i_j}\colon 1 \leq j \leq w\}$  
defines an {\em  $r$-walk} if there is an 
ordering  
$\overline{M}_{l_1} ,\ldots, \overline{M}_{l_w}$ 
of its elements such that $\overline{M}_{l_i}$ and 
$\overline{M}_{l_{i+1}}$ both have $-1$ in
row $r$ and column $l_{i+1}$, for $1\leq i\leq w$.
The walk is an $r$-{\em path} if its initial 
(equivalently, final) element shares a $-1$ in row 
$r$ with a generalized coboundary matrix not in the 
walk itself.
Clearly, the number of $-1$s in row $r$ of 
$\overline{M}_{i_1} \circ \cdots \circ 
\overline{M}_{i_w}$ is $2 {\cal C}_r$  where
${\cal C}_r$ is the number of maximal $r$-paths 
in $\{\overline{M}_{i_1} ,\ldots, 
\overline{M}_{i_w}\}$.
To calculate $\mathcal{C}_r$ we set up a bipartite 
graph on vertex sets $S = \{i_1,\ldots,i_w\}$ 
and $T= \{[i_1-r+1]_{4t+2},\ldots,
[i_w-r+1]_{4t+2}\}$. Draw an edge between $i_j\in S$ 
and $l \in T$ if $i_j = l$ or 
$l=[i_j-r+1]_{4t+2}\in S$. 
The number of maximal paths in this
bipartite graph is $\mathcal{C}_r$.

Next, let ${\cal I}_r$ be the number of columns where 
$N$ and $\overline{M}_{i_1} \circ
\cdots \circ 
\overline{M}_{i_w}$ share a
$-1$ in row $r$. These column indices comprise 
the intersection of $\{ 4t+4-r,\ldots ,4t+2\}$ and the 
set of endpoints of the previously calculated maximal 
$r$-paths.
\begin{theorem}[cf.{~\cite[Proposition 1]{AAFR08}}]
\label{FinalCount}
A $\mathbb{Z}_{4t+2}$-cocyclic matrix 
$M_{i_1} \circ \cdots \circ 
M_{i_w} \circ N$ 
is quasi-orthogonal if and only if, for 
$2\leq r\leq 2t+1$,
\[
\begin{array}{ll}
{\cal C}_r \in\{{\cal I}_r+t+\frac{1-r}{2},\,
{\cal I}_r+t+\frac{3-r}{2}\}
& \qquad r\,\,\mbox{odd} \\
\vspace{-12pt} & \\
{\cal C}_r ={\cal I}_r+t+1-\frac{r}{2} 
& \qquad r\,\,\mbox{even}.
\end{array}
\]
\end{theorem}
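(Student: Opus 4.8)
The plan is to combine the row-sum characterization of quasi-orthogonality in Lemma~\ref{propositiontquenacy}~(iii) with the combinatorial bookkeeping for negative entries developed just before the statement. Fix a row index $r$ with $2 \leq r \leq 2t+1$. By Lemma~\ref{propositiontquenacy}~(iii), the matrix $M_{i_1}\circ\cdots\circ M_{i_w}\circ N$ is quasi-orthogonal precisely when row $r$ has sum $0$ (if $r$ is even) or $\pm 2$ (if $r$ is odd), and the same holds for row $4t+4-r$ by part~(i) of that lemma, so it suffices to control rows $2 \leq r \leq 2t+1$ — exactly the range in the theorem. (Row $2t+2$ is automatically fine by part~(ii), and row $1$ is the excluded initial row.) The row sum is $4t+2$ minus twice the number of $-1$s in the row, so the conditions on row sums translate into conditions on $\nu_r$, the number of $-1$s in row $r$: row sum $0$ means $\nu_r = 2t+1$, and row sum $\pm 2$ means $\nu_r \in \{2t, 2t+2\}$.

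Next I would compute $\nu_r$ in terms of the walk data. Row $r$ of $M_\psi = M_{i_1}\circ\cdots\circ M_{i_w}\circ N$ has a $-1$ in column $c$ iff the total number of factors contributing a $-1$ in position $(r,c)$ is odd. The factor $N$ contributes a $-1$ in row $r$ exactly in columns $\{4t+4-r,\ldots,4t+2\}$ (the back-negacyclic pattern, with $r-1$ such columns), while $\overline{M}_{i_1}\circ\cdots\circ\overline{M}_{i_w}$ contributes $2\mathcal{C}_r$ negative entries in row $r$, located at the endpoints of the maximal $r$-paths. Since passing from $M_{i_j}$ to $\overline{M}_{i_j}$ negates row $i_j$, and since row negations do not affect whether a row sum has the required value (row excess is preserved), I can work with $\overline{M}_{i_j}$ throughout and then account for the sign corrections; but more directly, the parity argument shows that a column $c$ carries a $-1$ in row $r$ of $M_\psi$ iff it lies in exactly one of the two sets "endpoints of maximal $r$-paths" and "$\{4t+4-r,\ldots,4t+2\}$", after correcting for the row negations that turned $M$ into $\overline{M}$. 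Inclusion–exclusion gives the count of such columns as (number of path endpoints) $+$ ($r-1$) $-$ $2\mathcal{I}_r$, i.e. the contributions overlap in exactly $\mathcal{I}_r$ columns by the definition of $\mathcal{I}_r$.

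The arithmetic then finishes it. Each maximal $r$-path has two endpoints in the column set, so the number of negative entries coming from the generalized-coboundary part is $2\mathcal{C}_r$; combining with the $r-1$ columns from $N$ and subtracting the $2\mathcal{I}_r$ doubly-counted ones (which must be removed twice since a column hit by both an odd number of coboundary $-1$s and $N$'s $-1$ becomes a $+1$) yields
\[
\nu_r = 2\mathcal{C}_r + (r-1) - 2\mathcal{I}_r .
\]
Setting $\nu_r = 2t+1$ for even $r$ gives $2\mathcal{C}_r = 2t+1-(r-1)+2\mathcal{I}_r = 2t+2-r+2\mathcal{I}_r$, i.e. $\mathcal{C}_r = \mathcal{I}_r + t + 1 - \tfrac{r}{2}$, matching the even case. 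Setting $\nu_r \in \{2t,2t+2\}$ for odd $r$ gives $2\mathcal{C}_r \in \{2t-(r-1)+2\mathcal{I}_r,\, 2t+2-(r-1)+2\mathcal{I}_r\}$, i.e. $\mathcal{C}_r \in \{\mathcal{I}_r + t + \tfrac{1-r}{2},\, \mathcal{I}_r + t + \tfrac{3-r}{2}\}$, matching the odd case. Running this equivalence over all $r$ in the stated range, and invoking Lemma~\ref{propositiontquenacy}~(i)--(ii) to handle the remaining rows, completes the proof.

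I expect the main obstacle to be the careful parity/inclusion–exclusion accounting in the middle step: one must track exactly how the row negations relating $M_{i_j}$ to $\overline{M}_{i_j}$ interact with the back-negacyclic entries of $N$ in row $r$, and verify that the overlap count is precisely $\mathcal{I}_r$ as defined (endpoints of maximal $r$-paths lying in $\{4t+4-r,\ldots,4t+2\}$), with no off-by-one errors in the ranges. The cited Proposition~1 of \cite{AAFR08} handles the orthogonal (row sum $0$) case, so the new content is chiefly the odd-$r$ branch with its two admissible values and the use of Lemma~\ref{propositiontquenacy} to restrict attention to $2 \leq r \leq 2t+1$.
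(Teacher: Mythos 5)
Your proposal is correct and follows essentially the same route as the paper: the paper's one-line proof likewise counts the $-1$s in row $r$ of $\overline{M}_{i_1}\circ\cdots\circ\overline{M}_{i_w}\circ N$ as $2\mathcal{C}_r+r-1-2\mathcal{I}_r$ and then invokes Lemma~\ref{propositiontquenacy} to convert the row-sum conditions ($0$ for even $r$, $\pm 2$ for odd $r$) into the stated constraints on $\mathcal{C}_r$. Your write-up simply makes explicit the inclusion--exclusion and the reduction to rows $2\leq r\leq 2t+1$ that the paper leaves implicit.
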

\begin{proof}
The number of $-1$s in row $r$ of 
$\overline{M}_{i_1} \circ \cdots 
 \circ 
\overline{M}_{i_w} \circ N$ is 
$2 {\cal C}_r + r-1 - 2{\cal I}_r$, so
Lemma~\ref{propositiontquenacy} gives the result.
\hfill $\Box$
\end{proof}
\begin{corollary}\label{tBound}
Let $\psi=\gamma\prod_{j=1}^w \partial_{i_j}$ 
with $\partial_{i_j}\in{\cal B}$. 
If $\psi$ is quasi-orthogonal then 
$t\leq w\leq 3t+1$.
\end{corollary}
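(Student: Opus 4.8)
\emph{Proof proposal.} The plan is to obtain both inequalities from Theorem~\ref{FinalCount} applied with the single choice $r=2$, after reinterpreting the quantities $\mathcal{C}_2$ and $\mathcal{I}_2$ in elementary terms. Write $\psi=\gamma\prod_{i\in S}\partial_i$ with $S=\{i_1,\dots,i_w\}\subseteq\{2,\dots,4t+2\}$; we may assume the $\partial_{i_j}$ distinct, so that $w=|S|$ (otherwise $w$ is not an invariant of $\psi$, since $B^2(G,\mathbb{Z}_2)$ has exponent $2$). Thus $M_\psi=M_{i_1}\circ\cdots\circ M_{i_w}\circ N$, and Theorem~\ref{FinalCount} is available.

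The first step is to unwind $\mathcal{C}_2$ and $\mathcal{I}_2$. Since the two row-$r$ negative entries of $\overline{M}_i$ lie in columns $i$ and $[i-r+1]_{4t+2}$, for $r=2$ they lie in the consecutive columns $i$ and $i-1$. Identifying $\overline{M}_i$ with the edge $\{i-1,i\}$ of the path $1-2-\cdots-(4t+2)$ (the wrap-around edge $\{4t+2,1\}$, which would correspond to $i=1$, never occurs because $1\notin S$), the family $\{\overline{M}_i:i\in S\}$ becomes a set of edges of this path; its components containing an edge are in natural bijection with the maximal runs of consecutive integers contained in $S$. Hence $\mathcal{C}_2=m$, where $m$ denotes the number of such maximal runs; in particular the column graph has no cyclic component, so there is no correction term. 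The endpoints of these components are the integers $b$ and $a-1$ as $[a,b]$ ranges over the maximal runs of $S$; since $a-1\leq 4t+1<4t+2$ always, $\mathcal{I}_2=|\{4t+2\}\cap\{\text{endpoints}\}|$ equals $1$ if $4t+2\in S$ and $0$ otherwise.

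With this in hand, Theorem~\ref{FinalCount} at $r=2$ reads $\mathcal{C}_2=\mathcal{I}_2+t$, i.e.\ $m=\mathcal{I}_2+t\in\{t,t+1\}$. For the lower bound, each of the $m$ maximal runs is nonempty, so $w=|S|\geq m\geq t$. For the upper bound, the $m$ runs are pairwise disjoint intervals inside the linearly ordered set $\{2,\dots,4t+2\}$ of size $4t+1$ (no wrap-around, since $1\notin S$), hence separated by at least $m-1$ empty positions, so $w\leq(4t+1)-(m-1)=4t+2-m$. If $\mathcal{I}_2=1$ then $m=t+1$ and $w\leq 3t+1$. If $\mathcal{I}_2=0$ then $4t+2\notin S$ as well, so $S\subseteq\{2,\dots,4t+1\}$, a set of only $4t$ positions, whence $w\leq 4t-(m-1)=4t-(t-1)=3t+1$. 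In either case $t\leq w\leq 3t+1$.

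I do not expect a genuine obstacle; the delicate point is purely bookkeeping at the boundary column $4t+2$ — getting the value of $\mathcal{I}_2$ right (it detects whether $4t+2\in S$) and noticing that $\mathcal{I}_2=0$ forces $4t+2\notin S$, which is exactly what sharpens the naive estimate $w\leq 3t+2$ to $w\leq 3t+1$. The only other thing to be careful about is justifying, as above, that for $r=2$ the count $\mathcal{C}_2$ is literally the number of maximal runs of $S$ (with no cyclic chains to subtract), which is why the path $1-2-\cdots-(4t+2)$, rather than the full $(4t+2)$-cycle, is the right ambient graph.
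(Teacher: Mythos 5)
Your proof is correct and follows essentially the same route as the paper: apply Theorem~\ref{FinalCount} at $r=2$, read $\mathcal{C}_2$ as the number of maximal runs of consecutive indices in $S$, and bound $w$ below by the number of runs and above by gap-counting between the runs. If anything, your case analysis on whether $4t+2\in S$ is more careful than the paper's own proof, which asserts $\mathcal{I}_2=0$ outright (true only when $4t+2\notin S$) and silently absorbs the other case.
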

\begin{proof}
We have ${\cal I}_2=0$, and ${\cal C}_2 =t$
by Theorem~\ref{FinalCount}.
Thus $t\leq w$. 
On the other hand, since 
the basis of coboundaries forms a $2$-path, 
at least $t-1$ coboundaries must be removed to 
get $t$ $2$-paths. Hence $w\leq 4t-(t-1)$.
\hfill $\Box$
\end{proof}

Corollary~\ref{tBound} is equivalent to
\begin{lemma}
If $\phi\colon \mathbb{Z}_{4t+2} \rightarrow 
\{\pm 1\}$ is a GOBS containing $w$
occurrences of $-1$ then $t\leq w \leq 3t+1$. 
\end{lemma}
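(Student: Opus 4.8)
The plan is to derive this statement as a direct translation of Corollary~\ref{tBound} via the dictionary established in Section~\ref{Section3}. Recall that a GOBS $\phi$ of length $4t+2$ is, by definition, a GOBA$(\mathbf{s})$ with $r=z_1=1$ and $\mathbf{s}=(4t+2)$, $\mathbf{z}=(1)$; by Theorem~\ref{qc-goba} this is equivalent to $f_{\mathbf z}\partial\phi$ being quasi-orthogonal over $G=\mathbb Z_{4t+2}$. For the cyclic group with the ordering of Section~\ref{Section4}, $f_{\mathbf z}=\gamma_{4t+2}=\gamma$, so quasi-orthogonality of $\gamma\,\partial\phi$ is exactly the hypothesis of Corollary~\ref{tBound} once we write $\partial\phi$ in the coboundary basis $\{\partial_i\}$.

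The remaining task is to match the number $w$ of coboundaries $\partial_{i_j}$ appearing in the decomposition $\psi=\gamma\prod_{j=1}^w\partial_{i_j}$ with the number of occurrences of $-1$ in $\phi$. Writing $\phi=\prod_k\delta_k^{j_k}$ where $\delta_k$ is the characteristic function of $\{g_k\}$ and $j_k\in\{0,1\}$, we have $\partial\phi=\prod_k\partial_k^{j_k}$, and $\phi(g_k)=-1$ precisely for those $k$ with $j_k=1$; note $\phi$ is normalized so $k=1$ never contributes. Hence the number of indices $j$ with $\partial_{i_j}$ present (i.e.\ $w$) equals $|\{k>1 : \phi(g_k)=-1\}|$, which is the number of $-1$s in $\phi$. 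So the $w$ of Corollary~\ref{tBound} is literally the $w$ of the present lemma, and the bound $t\le w\le 3t+1$ transfers verbatim.

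I would phrase the proof in essentially one paragraph: invoke Theorem~\ref{qc-goba} to get that $\phi$ a GOBS $\Leftrightarrow$ $\gamma\,\partial\phi$ quasi-orthogonal over $\mathbb Z_{4t+2}$; observe that under the linear-algebra decomposition $\partial\phi=\prod_{j=1}^w\partial_{i_j}$ (with distinct $i_j\in\{2,\dots,4t+2\}$) the exponent $w$ counts exactly the arguments $g_k\ne 0$ on which $\phi$ takes the value $-1$; then apply Corollary~\ref{tBound}.

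The only point needing a little care — and hence the main (minor) obstacle — is confirming that the indexing conventions genuinely agree: the coboundary basis in Section~\ref{Section4} is $\{\gamma,\partial_i : 2\le i\le 4t+2\}$ with $\partial_i=\partial\delta_i$ and matrices indexed by $1,\dots,4t+2$, matching the "elementary coboundary" set-up in the discussion after Theorem~\ref{QOBaseCase}; and $f_{(1)}$ for $G=\mathbb Z_{4t+2}$ is $\gamma_{4t+2}=\gamma$ by Proposition~\ref{Proposition3}(i). Once these identifications are in place there is no further content, so the lemma follows immediately, as the word "equivalent" in the preceding sentence already signals.
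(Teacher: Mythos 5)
Your overall route is the paper's: identify a GOBS of length $4t+2$ with a quasi-orthogonal cocycle $\gamma\,\partial\phi$ via Theorem~\ref{qc-goba}, match the number of coboundary factors against the number of $-1$s, and quote Corollary~\ref{tBound}. But you have skipped the one step that \emph{is} the paper's proof, and it is not a formality. A GOBS is not normalized by definition, whereas Theorem~\ref{qc-goba} is stated for normalized arrays and your identification of $w$ with the number of factors uses $\phi(0)=1$; this hypothesis cannot simply be ``noted''. Without some normalization the lemma even fails as a statement about arbitrary GOBSs: $-\phi$ is a GOBS whenever $\phi$ is, it has $4t+2-w$ occurrences of $-1$, and the interval $[t,3t+1]$ is not invariant under $w\mapsto 4t+2-w$ (for $t=1$ the GOBS $(1,-1,1,1,1,1)$ has $w=1$, so its negation is a GOBS with $w=5>3t+1=4$). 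So some reduction of the given $\phi$ to a suitable representative is unavoidable.

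Second, even granting $\phi(0)=1$, your decomposition $\partial\phi=\prod_j\partial_{i_j}$ involves $\partial_{4t+2}$ exactly when $\phi(4t+1)=-1$. The elementary coboundaries are not independent --- the actual basis is $\{\partial_2,\dots,\partial_{4t+1}\}$ --- and the proof of Corollary~\ref{tBound} takes ${\cal I}_2=0$ and counts maximal $2$-paths inside a set of $4t$ coboundaries; it does not cover a representation containing $\partial_{4t+2}$ unless you redo that count (one then gets ${\cal I}_2=1$ and ${\cal C}_2=t+1$ from Theorem~\ref{FinalCount}, and the bound survives, but that argument has to be supplied). The paper disposes of both issues at once by observing that negating all odd-index entries or all even-index entries of a GOBS produces another GOBS, so that one may assume $\phi(0)=\phi(4t+1)=1$; that two-line normalization is the entire content of the paper's proof and is exactly what your write-up omits.
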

\begin{proof}
Negating  all odd index entries or all even 
index entries of a GOBS produces another GOBS. 
So it may be assumed that $\phi(0)=\phi(4t+1)=1$.
\hfill $\Box$
\end{proof}

We search for NGPs in the set of quasi-orthogonal 
cocycles over ${\mathbb Z}_{4t+2}$, motivated
by the ubiquity of these cocycles and the 
optimal autocorrelation of each map in the 
resulting pair.
Computer-aided searches 
found the NGPs in Table~1. 
\begin{center}\begin{table}[htb]
\label{tabla1}
\[
\begin{array}{|c||c|c|} \hline k  
& \phi_1 & \phi_2 \\ \hline
\hline  3  &  1^2,4  &  2,1,3   \\
\hline 5 & 2,1^3,5 & 3,1,2,1,3   \\
 \hline 7 & 
2,1,5,1^3,3 & 2,1,4,2,1^2,3 \\
\hline
9 & 3,1,2,1^3,3,1,5
& 2,1,2,3,2,1^3,5 \\
\hline
13 & 3,3,2,2,1,2,1,2,1^4,6 
& 3,3,1,3,1,2,1,2,1^4,6 \\
\hline 
15 & \ 3,2,4,1^2,2,2,1,2,1^5,7 \ 
& \ 3,2,3,2,1,2,2,1,2,1^5,7 \ \\
\hline
\end{array}
\]
\caption{NGPs $(\phi_1, \phi_2)$ from 
quasi-orthogonal cocycles over 
$\mathbb{Z}_{2k}$}
\end{table}
\end{center}
\noindent 
Each sequence in Table~1 starts with $1$ and is 
designated by an integer string,
where $i$ in the string means a run of $i$ 
identical entries in the sequence, 
and $1^j$ is an alternating 
subsequence of length $j$.
There are no NGPs among the sequences coming 
from quasi-orthogonal
cocycles over $\mathbb{Z}_{22}$
(however, as we know, NGPs of length $22$ exist).
This gap could be related to the maximal 
determinant problem: the Ehlich-Wojtas bound is 
not attainable because 
$21$ is not a sum of two squares. 

Egan~\cite{Ega16} classified NGPs of length 
$2k$ for $k\leq 10$ up to equivalence 
with respect to five elementary operations as
defined in \cite{BD15}. The set of NGPs that come
from GOBSs is invariant under each elementary 
operation. Table~2 records the 
number $\hat{n}(k)$ of such NGPs of length $2k$, 
and the number $\hat{d}(k)$ of their equivalence 
classes.
To compare against 
\cite[Table 2]{Ega16}, we have included the 
total number $n(k)$ of NGPs of length $2k$ and
the number $d(k)$ of their equivalence 
classes. 
\begin{center}\begin{table}[h]\label{tabla2}
\[
\begin{array}{|c||c|c|c|c|} \hline 
\  k \ & n(k) & \hat{n}(k) & \ d(k) \
& \ \hat{d}(k) \ \\
\hline \hline
3  &  576  &  576  &  1  &  1   \\
\hline 
5  & 11 200  & 4 800  &  3  &  2  \\
\hline  
7  &  90 944  &  18 816  &  5 & 1 \\ 
\hline
9 & \ 1 041 984 \ & \ 62 208 \ &  20  &  2  
\\
\hline

\end{array}
\]
\caption{Enumeration of NGPs and
their equivalence classes}
\end{table}
\end{center}
 \section{Normal cocyclic matrices} 
\label{NormalMatrices}
This section is essentially independent of the 
main thrust of the paper. Nonetheless, it  
addresses a fundamental question in 
algebraic design theory, which we answer 
in special cases that were the focus of 
Section~\ref{Section4}.

A matrix $M$ is \textit{normal} if it 
commutes with its transpose (possibly up to 
row or column permutations), i.e.,
$\mathrm{Gr}(M)=\mathrm{Gr}(M^\top)$, where
$\mathrm{Gr}(M)$ denotes the 
Grammian $MM^\top$. Many kinds of pairwise 
combinatorial designs are normal matrices
(the defining pairwise constraint on rows 
implies the same constraint on columns; 
see \cite[Chapter~7]{DF11}). 
We also note that the matrix of a quasi-orthogonal 
cocycle is normal~\cite[Remark~6]{AF17}. 
Thus, by the following lemma derived from
(\ref{condiciondecociclo}), 
a cocycle $\psi$ is quasi-orthogonal if and 
only if $M_\psi$ has optimal column excess. 
\begin{lemma}\label{REeqCE}
For any group $G$ and 
$\psi \in Z^2(G,\mathbb Z_2)$,
\[
\mathrm{Gr}(M_\psi)_{ij}=
\psi(g_ig_j^{-1},g_j)
\sum_{g\in G}\psi(g_ig_j^{-1},g)
\]
and
\[
\mathrm{Gr}(M_\psi^\top)_{ij}=
\psi(g_i,g_i^{-1}g_j)
\sum_{g\in G}\psi(g,g_i^{-1}g_j).
\]
\end{lemma}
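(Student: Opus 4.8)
The plan is to compute the two Grammian entries directly from the definition $\mathrm{Gr}(M_\psi)_{ij} = \sum_{g\in G}\psi(g_i,g)\psi(g_j,g)$, using the cocycle identity (\ref{condiciondecociclo}) to split each product into a factor depending only on the pair $(g_i,g_j)$ and a sum over $G$. For the first formula, I would set $g = g_j^{-1}k$ as $k$ ranges over $G$, so that $\psi(g_j,g) = \psi(g_j, g_j^{-1}k)$; applying the cocycle equation to the triple $(g_ig_j^{-1}, g_j, g_j^{-1}k)$ gives $\psi(g_ig_j^{-1},g_j)\psi(g_i, g_j^{-1}k) = \psi(g_ig_j^{-1}, k)\psi(g_j, g_j^{-1}k)$, i.e. $\psi(g_i,g)\psi(g_j,g)^{-1} = \psi(g_ig_j^{-1},g_j)^{-1}\psi(g_ig_j^{-1},k)$ after rearranging and using that values lie in $\mathbb{Z}_2$ (so inverses are trivial). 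Summing over $k\in G$, and noting $k\mapsto g_j^{-1}k$ is a bijection, yields $\mathrm{Gr}(M_\psi)_{ij} = \psi(g_ig_j^{-1},g_j)\sum_{g\in G}\psi(g_ig_j^{-1},g)$.

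For the second formula, the same strategy applies to $\mathrm{Gr}(M_\psi^\top)_{ij} = \sum_{g\in G}\psi(g,g_i)\psi(g,g_j)$, now substituting $g = k g_i^{-1}$ and applying the cocycle equation to the triple $(kg_i^{-1}, g_i, g_i^{-1}g_j)$: this gives $\psi(kg_i^{-1}, g_i)\psi(k, g_i^{-1}g_j) = \psi(kg_i^{-1}, g_j)\psi(g_i, g_i^{-1}g_j)$, so that $\psi(g,g_i)^{-1}\psi(g,g_j) = \psi(g_i, g_i^{-1}g_j)^{-1}\psi(kg_i^{-1}, g_i^{-1}g_j)$ — wait, I need to be careful: here the varying factor should be $\psi(k, g_i^{-1}g_j)$ as written in the statement. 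Running the substitution $k = g g_i$ and letting $g$ range over $G$, the cocycle identity isolates the constant factor $\psi(g_i, g_i^{-1}g_j)$ (using $\mathbb{Z}_2$-valuedness to drop inverses and signs), leaving $\sum_{g\in G}\psi(g, g_i^{-1}g_j)$, which is exactly the claimed expression. Throughout, I would normalize carefully by checking the edge cases $g_i = g_j$ and using $\psi(1,1)=1$.

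The main obstacle, and the only real subtlety, is bookkeeping the substitutions and inverses so that the "constant" cocycle factor that gets pulled outside the sum matches the form stated in the lemma exactly — in particular confirming that the surviving sum runs over the correct argument ($\psi(g_ig_j^{-1},g)$ versus $\psi(g,g_ig_j^{-1})$, and likewise $\psi(g,g_i^{-1}g_j)$ versus $\psi(g_i^{-1}g_j,g)$). Since $U = \mathbb{Z}_2$ is elementary abelian, every element is its own inverse, which kills most sign ambiguities, but one still has to verify that the reindexing bijection ($g\mapsto g_j^{-1}g$ in the first case, $g\mapsto gg_i$ in the second) is applied consistently and that no stray factor of $\psi$ evaluated at an identity argument has been overlooked. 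Once the substitution is pinned down, both identities drop out in a line or two; the claim in the paper that this "derives from (\ref{condiciondecociclo})" is accurate, and the application to normality — namely that $M_\psi M_\psi^\top = M_\psi^\top M_\psi$ up to a permutation, so that optimal row excess is equivalent to optimal column excess — follows by comparing these two formulas, since the map $g_ig_j^{-1}\mapsto g_i^{-1}g_j$ is realized by a permutation of the index set.
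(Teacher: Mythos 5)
Your proposal is correct: the direct computation of $\mathrm{Gr}(M_\psi)_{ij}=\sum_{g}\psi(g_i,g)\psi(g_j,g)$ and $\mathrm{Gr}(M_\psi^\top)_{ij}=\sum_{g}\psi(g,g_i)\psi(g,g_j)$, followed by one application of the cocycle identity to the triples $(g_ig_j^{-1},g_j,g)$ and $(g,g_i,g_i^{-1}g_j)$ respectively and a reindexing bijection, is exactly the (omitted) argument the paper intends when it says the lemma is ``derived from (\ref{condiciondecociclo})''. Both substitutions check out, and the $\mathbb{Z}_2$-valuedness disposes of the inverses as you note, so nothing is missing.
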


We use Lemma~\ref{REeqCE} to prove that cocyclic
matrices for two familiar classes of indexing 
groups are normal.
\begin{proposition}
Let $G$ be abelian or dihedral of order $2m$, 
$m$ odd, and let $\psi\in Z^2(G,\mathbb{Z}_2)$ 
where $\psi \not \in B^2(G,\mathbb{Z}_2)$ 
if $G$ is dihedral.  
Then $M_\psi$ is normal (under the same indexing 
of rows and columns by the elements of $G$).
\end{proposition}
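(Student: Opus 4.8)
The plan is to verify, via Lemma~\ref{REeqCE}, that the $(i,j)$‑entries of $\mathrm{Gr}(M_\psi)$ and $\mathrm{Gr}(M_\psi^\top)$ coincide for all $i,j$, i.e.\ that
\[
\psi(g_ig_j^{-1},g_j)\sum_{g\in G}\psi(g_ig_j^{-1},g)
=\psi(g_i,g_i^{-1}g_j)\sum_{g\in G}\psi(g,g_i^{-1}g_j).
\]
Note first that $g_i^{-1}g_j=g_j^{-1}(g_ig_j^{-1})^{-1}g_j$, so the elements $g_ig_j^{-1}$ and $g_i^{-1}g_j$ are always conjugate in $G$; I would then treat the abelian and dihedral cases separately.

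For $G$ abelian I claim $E_\psi$ is forced to be abelian, so that $\psi$ is symmetric and $M_\psi=M_\psi^\top$ is trivially normal. Indeed $E_\psi$ has order $4m$, central subgroup $\langle(-1,1)\rangle\cong\mathbb Z_2$, and abelian quotient $G$; hence $E_\psi$ is nilpotent of class at most $2$, so it is the direct product of its Sylow $2$‑subgroup---of order $4$, therefore abelian---with its Sylow $2'$‑subgroup, whose derived subgroup lies inside the central $\mathbb Z_2$ yet has odd order and so is trivial. Thus $E_\psi$ is abelian, and comparing $(1,g)(1,h)$ with $(1,h)(1,g)$ in $E_\psi$ gives $\psi(g,h)=\psi(h,g)$ for all $g,h\in G$.

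For $G=D_{2m}=\langle\bar x,\bar y\mid\bar x^{m}=\bar y^{2}=1,\ \bar y\bar x\bar y=\bar x^{-1}\rangle$ I use that $D_{2m}$ has trivial Schur multiplier for $m$ odd, whence $H^2(D_{2m},\mathbb Z_2)\cong\mathbb Z_2$ and every $\psi\notin B^2(G,\mathbb Z_2)$ equals $\psi_0\partial\phi$ for a fixed representative $\psi_0$; I take $\psi_0$ to be the cocycle defined by the transversal $\bar x^{i}\mapsto x^{i}$, $\bar x^{i}\bar y\mapsto x^{i}y$ of $\langle x^{m}\rangle$ in $Q_{4m}$ (so $E_{\psi_0}\cong Q_{4m}$). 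Ordering $G$ as $1,\bar x,\dots,\bar x^{m-1},\bar y,\dots,\bar x^{m-1}\bar y$, a short computation with this transversal gives
\[
M_{\psi_0}=\left[\begin{array}{rr} P & P\\ Q & -Q\end{array}\right],\qquad
P=[(-1)^{\lfloor(i+j)/m\rfloor}]_{0\le i,j<m},\qquad Q=RP,
\]
where $P$ is symmetric and $R$ is the $m\times m$ reversal permutation matrix, so $R^\top=R$ and $R^{2}=I$. A direct check gives $PR+RP=2I$, so $RPR=2R-P$ and $RP^2R=(RPR)^2=P^2$; together with $Q^\top Q=P^\top R^2P=P^2$ and $QQ^\top=RP^2R=P^2$, block multiplication yields
\[
M_{\psi_0}M_{\psi_0}^\top=\left[\begin{array}{cc} 2P^2 & 0\\ 0 & 2P^2\end{array}\right]
=\left[\begin{array}{cc} P^2+Q^\top Q & P^2-Q^\top Q\\ P^2-Q^\top Q & P^2+Q^\top Q\end{array}\right]=M_{\psi_0}^\top M_{\psi_0},
\]
so $M_{\psi_0}$ is normal.

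The remaining step---passing from $\psi_0$ to an arbitrary $\psi=\psi_0\partial\phi$---is where I expect the real work. Conjugating $M_{\psi_0\partial\phi}$ by the $\pm1$ diagonal matrix $\mathrm{diag}(\phi(g))$ simultaneously preserves $MM^\top$ and $M^\top M$, hence preserves normality, and transforms $M_{\psi_0\partial\phi}$ into the Hadamard product $M_{\psi_0}\circ\Phi$ with $\Phi=[\phi(g_ig_j)]$ the $D_{2m}$‑group‑developed matrix of $\phi$; so it suffices to show $M_{\psi_0}\circ\Phi$ is normal for every $\phi\colon D_{2m}\to\{\pm1\}$. My plan is to exploit the matching $2\times2$ block structures: $\Phi=\left[\begin{array}{cc}A & B\\ B^\sharp & A^\sharp\end{array}\right]$ with $A,B$ symmetric back‑circulant of order $m$ and $A^\sharp,B^\sharp$ the associated circulants (tied to $A,B$ by the negation permutation $\Pi$ of $\mathbb Z_m$, a genuine permutation precisely because $m$ is odd), so that $M_{\psi_0}\circ\Phi$ has symmetric top blocks $P\circ A$, $P\circ B$ and bottom blocks $Q\circ B^\sharp$, $-\,Q\circ A^\sharp$; the three matrix identities equivalent to normality then reduce to relations among these circulant‑type matrices and the permutations $R$ and $\Pi$, to be verified by direct computation. (Alternatively one can argue entirely inside Lemma~\ref{REeqCE}, using that $D_{2m}$ is ambivalent so that $g_ig_j^{-1}$ and $g_i^{-1}g_j$ are conjugate, and then match the scalar prefactors $\psi(g_ig_j^{-1},g_j)$ and $\psi(g_i,g_i^{-1}g_j)$; the sign bookkeeping is the delicate point either way.)
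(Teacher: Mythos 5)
Your abelian case is complete and in fact more conceptual than the paper's: you show $E_\psi$ is forced to be abelian (nilpotent of class at most $2$, Sylow $2$-subgroup of order $4$, odd part with derived subgroup inside a central group of order $2$), hence $\psi$ is symmetric and $M_\psi=M_\psi^\top$. The paper gets the same conclusion more directly, by noting that the class representative with matrix $\left[\begin{array}{cc} J & J\\ J & -J\end{array}\right]$ and every coboundary over an abelian group are symmetric. Your verification that the fixed representative $M_{\psi_0}$ (with blocks $P$, $P$, $Q=RP$, $-Q$) is normal is also correct --- the identity $PR+RP=2I$ holds and the block multiplication checks out --- and the reduction of the general case to normality of $M_{\psi_0}\circ\Phi$ via conjugation by $\mathrm{diag}(\phi(g))$ is legitimate, since such conjugation preserves both $MM^\top$ and $M^\top M$.

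The dihedral case, however, has a genuine gap: everything after that reduction is a plan, not a proof. The whole content of the proposition for dihedral $G$ is that normality survives twisting by an \emph{arbitrary} coboundary $\partial\phi$, and you stop exactly there, leaving the needed relations among $P$, $R$, $\Pi$ and the four circulant-type blocks of $\Phi$ ``to be verified by direct computation.'' Your fallback observation that $D_{2m}$ is ambivalent (so $g_ig_j^{-1}$ and $g_i^{-1}g_j$ are conjugate) cannot close this on its own, because the two sums in Lemma~\ref{REeqCE} are a row sum and a column sum of $M_\psi$ and a cocycle is not a class function; some identity specific to the situation must be exhibited. The paper supplies precisely these missing ingredients, quadrant by quadrant: (a) every row and column sum of $M_\psi$ indexed by a reflection vanishes, which forces both Grammians to be zero on the mixed quadrants; (b) for $k\leq m$ the $k$th row sum equals the $k$th column sum, which, after matching the scalar prefactors $\psi(a^{i-1}b,a^{i-j})=\psi(a^{i-j},a^{j-1}b)$, settles the reflection--reflection quadrant; and (c) on the rotation--rotation quadrant the two entries reduce to $\sum_{k}\phi(a^k)\phi(a^{k+s})$ versus $\sum_{k}\phi(a^k)\phi(a^{k-s})$, which agree by reindexing. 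Until you produce the analogous identities for your block decomposition (or the prefactor/sum matching in the Lemma~\ref{REeqCE} formulation), the dihedral half of the proposition remains unproved.
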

\begin{proof}
We suppose that $G$ is generated by $a$ and $b$,
with $a^m = b^2=1$, and index rows and 
columns by the elements of $G$ under the 
ordering $1,a,\ldots,a^{m-1},\allowbreak 
 b,ab,\ldots,a^{m-1}b$.
A representative $\beta$ for the 
non-identity element of $H^2(G,\mathbb{Z}_2)$
has matrix 
\[
\small{
\left[ 
\begin{array}{rr}
\renewcommand{\arraycolsep}{.075cm}
 J & J \\ 
 J & -J 
 \end{array}\right]}.
 \]
Thus, if $G$ is abelian then $M_\psi$ is 
symmetric and so trivially normal.

Henceforth $G$ is dihedral.
Let $\psi = \beta \partial \phi$.
We collect together some basic 
properties of $M_\psi$.
\begin{enumerate}
\item[(i)]
For each $i$, 
$\{ \partial \phi(a^ib,a^j) \; |\; 
1\leq j \leq m\} = 
\{ \partial \phi(a^ib,a^jb) \; |\; 
1\leq j \leq m\}$; and for each $j$,
$\{ \partial \phi(a^i,a^jb) \; |\;
1\leq i \leq m\}=
\{ \partial \phi(a^ib,a^jb) \; |\; 
1\leq i \leq m\}$.
Thus, if $k>m$ then the $k$th row sum and 
$k$th column sum of $M_\psi$ are zero. 
\item[(ii)]  
Since $\{ \partial \phi(a^i,a^jb) \; |\; 
1\leq j \leq m\} = 
\{ \partial \phi(a^jb,a^i) \; |\; 
1\leq j \leq m\}$, 
the $k$th row sum of $M_\psi$ equals 
its $k$th column sum for $k\leq m$.
\end{enumerate}

Now we consider the Grammian quadrants
in turn.

If $1\leq i\leq m$ and $m+1\leq j\leq 2m$ then
\[
\mathrm{Gr}(M_\psi)_{ij}=
\psi(a^{i+j-2}b,a^{j-1} b)
\sum_{g\in G}\psi(a^{i+j-2}b,g)=0
\]
by Lemma~\ref{REeqCE} and (i); 
$\mathrm{Gr}(M_\psi^\top)_{ij}=0$
similarly.

Let $1\leq i\leq m$ and $1\leq j\leq m$. Then
\[
\mathrm{Gr}(M_\psi)_{ij}=
\partial\phi(a^{i-j},a^{j-1} )
\sum_{g\in G}\partial \phi(a^{i-j},g) 
= \phi(a^{j-1})\phi(a^{i-1})
\sum_{g\in G}\phi(g)\phi(a^{i-j}g)
\]
and 
\[
\mathrm{Gr}(M_\psi^\top)_{ij}=
\phi(a^{j-1})\phi(a^{i-1})
\sum_{g\in G}\phi(g)\phi(ga^{j-i}).
\]
These entries are equal by the identity
$\sum_{k=1}^m\phi(a^k)\phi(a^{k+1})=
\allowbreak 
\sum_{k=1}^m\phi(a^k)\phi(a^{k-1})$.

Finally, let $m+1\leq i , j\leq 2m$.
Then
\[
\mathrm{Gr}(M_\psi)_{ij}=
\psi(a^{i-j},a^{j-1}b )
\sum_{g\in G}\psi(a^{i-j},g) 
\]
and 
\[
\mathrm{Gr}(M_\psi^\top)_{ij}=
\psi(a^{i-1}b,a^{i-j} )
\sum_{g\in G}\psi(g,a^{i-j}). 
\]
Since $\psi(a^{i-1}b,a^{i-j})=  
\partial\phi(a^{i-1}b,a^{i-j}) 
=\partial \phi (a^{i-j},a^{j-1}b)=
\psi(a^{i-j},a^{j-1}b )$,
we are done by (ii).
\hfill $\Box$
\end{proof}
\begin{remark}
There are plenty of examples of non-normal 
cocyclic matrices $M_\psi$ for 
$\psi\not\in B^2(G,\mathbb{Z}_2)$
and $|G|$ divisible by $4$.
\end{remark}

\subsubsection*{Acknowledgments.} 
The  authors thank Kristeen Cheng for 
reading the manuscript, and V\'ictor \'Alvarez 
for his assistance with computations. 
We are also grateful to Ronan Egan, who shared 
his insights on NGPs with us. 
Remark~\ref{Hadgroups}  and reference 
\cite{ACP01} were kindly provided by one of 
the referees. This research has been supported 
by project FQM-016 funded by JJAA (Spain).

\end{document}